\newcommand*\patchAmsMathEnvironmentForLineno[1]{%
  \expandafter\let\csname old#1\expandafter\endcsname\csname #1\endcsname
  \expandafter\let\csname oldend#1\expandafter\endcsname\csname end#1\endcsname
  \renewenvironment{#1}%
     {\linenomath\csname old#1\endcsname}%
     {\csname oldend#1\endcsname\endlinenomath}}%
\newcommand*\patchBothAmsMathEnvironmentsForLineno[1]{%
  \patchAmsMathEnvironmentForLineno{#1}%
  \patchAmsMathEnvironmentForLineno{#1*}}%
\renewcommand{\le}{\leqslant}
\renewcommand{\ge}{\geqslant}
\renewcommand{\leq}{\leqslant}
\renewcommand{\geq}{\geqslant}
\renewcommand{\unlhd}{\trianglelefteqslant}
 \DeclareMathOperator{\Aut}{Aut}
\DeclareMathOperator{\Sym}{Sym} \DeclareMathOperator{\Inn}{Inn}
\DeclareMathOperator{\Alt}{Alt}
\DeclareMathOperator{\Base}{Base}\DeclareMathOperator{\Reg}{Reg}
\newcommand{\splitext}{\,\colon\!}
\begin{document}

\newtheorem{lem}{Lemma}
\newtheorem{thm}[lem]{Theorem}
\newtheorem{cor}[lem]{Corollary}
\newtheorem{prop}[lem]{Proposition}
\theoremstyle{remark}
\newtheorem{exmp}[lem]{Example}
\newtheorem{prob}{Problem}


\begin{flushright}
 MSC 20D60, 20B40, 20B35
\end{flushright}

\begin{center}
{\bf ON THE BASE SIZE OF A TRANSITIVE GROUP WITH SOLVABLE POINT STABILIZER\footnote{The author gratefully acknowledges
the support from  RFBR, projects 10-01-00391 and 10-01-90007,  ADTP
``Development
of the Scientific Potential of Higher
School'' of the Russian Federal Agency for Education (Grant
2.1.1.419),  Federal Target Grant "Scientific and
educational personnel of innovation Russia" for 2009-2013 (government
contract No. 02.740.11.5191 and No. 14.740.11.0346), Deligne 2004 Balzan prize
in mathematics, and the Lavrent'ev Young Scientists Competition (No 43 on
04.02.2010).}\\[2ex]
{\bf E.\,P.\,Vdovin}}\\[3ex]
\end{center}

\begin{small}

\quad We prove that the base size of a transitive group $G$ with solvable point
stabilizer and with trivial solvable radical is not greater than $k$
provided the same statement holds for the group of $G$-induced automorphisms of
each nonabelian composition factor of
$G$.

\quad Keywords:  solvable  subgroup, finite simple group, solvable radical.
\end{small}

\section{Introduction}

The term ``group'' always means a ``finite group''.
We use symbols $A\leq G$ and $A\unlhd G$ if  $A$ is a
subgroup of
$G$, and $A$ is a
normal subgroup of $G$, respectively. If $\Omega$ is a (finite) set, then by $\Sym(\Omega)$ we denote the group of all
permutations of $\Omega$. We also denote $\Sym(\{1,\ldots,n\})$ by $\Sym_n$.
 Given $H\leq G$ we denote by $H_G=\cap_{g\in G} H^g$ the core of~$H$.

Let $A,B$ be subgroups of $G$ such that $B\unlhd A$. Then $N_G(A/B):=N_G(A)\cap
N_G(B)$ is the {\em normalizer} of
$A/B$ in $G$. If $x\in
N_G(A/B)$, then $x$ induces an automorphism of $A/B$ by $Ba\mapsto B x^{-1}ax$.
Thus there exists a homomorphism $N_G(A/B)\rightarrow \Aut(A/B)$. The image of
$N_G(A/B)$ under this homomorphism is denoted by $\Aut_G(A/B)$ and is called a
{\em group of $G$-induced automorphisms of} $A/B$.

Assume that $G$ acts on $\Omega$. An element $x\in
\Omega$ is called a {\em
$G$-regular point} if $\vert
xG\vert=\vert G\vert$, i.e., if the
$G$-orbit of $x$ is regular. Define an action of $G$ on $\Omega^k$ by
\begin{equation*}
g:(i_1,\ldots,i_k)\mapsto
(i_1g,\ldots,i_kg).
\end{equation*}
If $G$ acts faithfully and transitively on $\Omega$, then the minimal $k$ such that $\Omega^k$
possesses a $G$-regular
point is called the {\em base size} of $G$ and is denoted by~$\Base(G)$. For
every natural $m$ the number of $G$-regular
orbits in $\Omega^m$ is
denoted by $\Reg(G,m)$ (this number equals $0$ if $m<\Base(G)$). If $H$ is a
subgroup of $G$ and $G$ acts on the set $\Omega$
of right cosets of $H$ by right multiplications, then
$G/H_G$ acts faithfully and  transitively on $\Omega$. In this case we denote $\Base(G/H_G)$ and $\Reg(G/H_G,m)$ by
$\Base_H(G)$ and
$\Reg_H(G,m)$ respectively. We also say that $\Base_H(G)$ is the {\em base size
of} $G$ {\em with respect to}~$H$. Clearly, $\Base_H(G)$ is the minimal $k$
such that there exist $x_1,\ldots,x_k\in G$ with $H^{x_1}\cap \ldots\cap
H^{x_k}=H_G$.

There are a lot of papers dedicated to this subject.  It is impossible to
mention all of them, since the list of references would be much longer that the
paper.  We mention the papers, whose results are used in the present article.
A.Seress in \cite[Theorem~1.1]{Seress} proved that the base size of a primitive
solvable permutation group is not greater than $4$.
In \cite{Dolfi} S.Dolfi proved that in every $\pi$-solvable group $G$ there exist elements
$x,y\in G$ such that
the equality  $H\cap H^x\cap H^y=O_\pi(G)$ holds, where $H$ is a $\pi$-Hall subgroup of $G$ (see
also~\cite{VdovinIntersSolv}). V.I.Zenkov in \cite{Zenkov}
constructed an example of a group $G$ with a solvable $\pi$-Hall subgroup $H$
such that the intersection of five subgroups conjugate with $H$ in $G$ is equal to $O_\pi(G)$, while the intersection
of every four conjugates of $H$ is greater than $O_\pi(G)$ (see Example
\ref{S5wrS2} below). In \cite{VdovinZenkov} it is proven that if, for
every
almost simple group $S$ possessing a solvable $\pi$-Hall subgroup $H$, the
inequalities $\Base_H(S)\leq 5$ and $\Reg_H(S,5)\geq5$ hold, then
for every group $G$ possessing a
solvable $\pi$-Hall subgroup $H$  the
inequality $\Base_H(G)\leq 5$ holds. In
the present paper we  prove the following

\begin{thm}\label{MainTheorem}
Let $G$ be a group and let
\begin{equation}\label{compositionseries}
\{e\}=G_0< G_1< G_2<\ldots <G_n=G
\end{equation}
be a composition series of $G$ that is a refinement of a chief
series.  Assume that for some $k$ the following condition
{\em\bfseries(Orb-solv)} holds: If $G_i/G_{i-1}$ is nonabelian, then  for
every solvable subgroup $T$ of $\Aut_G(G_i/G_{i-1})$
we have
\begin{equation*}
 \Base_{T}(T(G_i/G_{i-1}))\le k\text{  and }
\Reg_{T}(T(G_i/G_{i-1}),k)\ge 5.
\end{equation*}
Then, for every maximal solvable subgroup $S$ of $G$, we have
$\Base_S(G)\le k$.
\end{thm}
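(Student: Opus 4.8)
The plan is to argue by induction on the composition length $n$ in~\eqref{compositionseries}, proving the stronger assertion that $\Base_S(G)\le k$ and $\Reg_S(G,k)\ge 5$ for every maximal solvable $S\le G$; the extra conclusion $\Reg_S(G,k)\ge 5$ is what makes the induction self-sustaining, since the bare inequality $\Base_S(G)\le k$ carries nothing that can be propagated through a chief factor. For $n\le 1$ the group $G$ is trivial, cyclic of prime order, or nonabelian simple: in the first two cases $S=G$ and the statement is immediate, while in the last $\Aut_G(G)\cong\Inn(G)\cong G$, the image of $S$ is a solvable subgroup $T$ with $\Base_T(\Aut_G(G))=\Base_S(G)$, and {\bfseries(Orb-solv)} applies directly. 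Before the inductive step I would record the reduction to the faithful case: since $\Base_S(G)$ and $\Reg_S(G,k)$ depend only on the coset action of $G/S_G$, since $S/S_G$ is maximal solvable in $G/S_G$ with trivial core, and since {\bfseries(Orb-solv)} descends to $G/S_G$ (the core $S_G$ is solvable, so all nonabelian composition factors of $G$ lie above it and keep the same groups of induced automorphisms), we may assume $S_G=\{e\}$, so that the solvable radical of $G$ is trivial.

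For the inductive step let $M$ be a minimal normal subgroup of $G$, i.e.\ the bottom chief factor of~\eqref{compositionseries}, and write $\bar G=G/M$. If $M$ is abelian, then $M$ is an elementary abelian $p$-group, $\bar S:=SM/M$ is again maximal solvable in $\bar G$ (any solvable overgroup of $\bar S$ pulls back to a solvable overgroup of $S$ because $M$ is solvable), and $\bar G$ has strictly shorter composition length while still satisfying {\bfseries(Orb-solv)}. The induction hypothesis then furnishes $k$ cosets whose intersection collapses $\bar G$ onto its core, with at least $5$ regular orbits to spare. It remains to separate points inside the fibres over $M$: here $S$ acts on $M$ as a solvable linear group, whose base size is at most $4$ by Seress' theorem~\cite[Theorem~1.1]{Seress}, and I would combine this bound with the surplus of regular orbits upstairs to realise $\Base_S(G)\le k$ and $\Reg_S(G,k)\ge 5$.

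If instead $M=T_1\times\dots\times T_r$ with the $T_i$ nonabelian simple and permuted transitively by $G$, then induction on $\bar G$ is no longer available in the same form, because $SM/M$ need not be maximal solvable once $M$ is nonsolvable. Instead I would pass to the almost simple sections: choosing the series inside $M$ along its direct factors so that $G_1=T_1$, the group $\Aut_G(T_1)=\Aut_N(T_1)$ with $N=N_G(T_1)$ is almost simple with socle $\cong T_1$, the image $T$ of $S\cap N$ in $\Aut_G(T_1)$ is solvable, and {\bfseries(Orb-solv)} yields $\Base_T(\Aut_G(T_1))\le k$ together with $\Reg_T(\Aut_G(T_1),k)\ge 5$. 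Using that $G$ permutes $\{T_1,\dots,T_r\}$ transitively, I would transport this datum to every factor and reassemble, intersecting $k$ conjugates of $S$ so that the intersection meets each $T_i$ trivially and simultaneously collapses $G/M$ onto its core; the $\ge 5$ regular orbits on each simple section supply exactly the freedom needed to make these choices compatible across the $r$ factors and the quotient.

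The main obstacle is this reassembly in both cases, namely a multiplicativity statement for regular orbits along the normal subgroup $M$: one must show that regular points for the $\bar G$-action and regular points for the induced actions on the simple (respectively abelian) factors can be realised by a single $k$-tuple of conjugating elements. This is precisely where the threshold $5$ enters — it is forced by Zenkov's example (Example~\ref{S5wrS2}) and is the least value of $\Reg$ that survives one layer of the induction — and where the hypothesis $\Reg\ge 5$, rather than a mere base-size bound, becomes indispensable. I expect the bookkeeping of how the cores $S_G$, $(SM)_G$ and the images in $\Aut_G(T_i)$ interlock, together with the verification that $\bar S$ (or its analogue) stays maximal solvable exactly where the argument needs it, to be the most delicate part of the proof.
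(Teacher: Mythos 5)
Your outline reproduces the correct skeleton --- reduce to trivial solvable radical, identify $F^\ast(G)=L_1\times\cdots\times L_r$ as a product of nonabelian simple groups, invoke \textbf{(Orb-solv)} on the almost simple sections $\Aut_G(L_i)$, and use the five regular orbits to reassemble --- but the reassembly itself, which you defer as ``the main obstacle,'' is the entire content of the proof and is missing. The actual mechanism is: after reducing to $G=F^\ast(G)S$ with $S$ transitive on $\{L_1,\ldots,L_r\}$ (a reduction you also omit; it requires splitting $F^\ast(G)$ as $\langle L_1^S\rangle\times E_2$ and a subdirect-product argument), $G$ embeds into the wreath product $\Aut_G(L_1)\wr(S\rho)$ with solvable top group $S\rho\le\Sym_r$. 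By Seress' \emph{second} theorem (Lemma~\ref{Seresslemm}, i.e.\ \cite[Theorem~1.2]{Seress}, not the Theorem~1.1 you cite), $S\rho$ admits an asymmetric partition $P_1\sqcup\cdots\sqcup P_5$ of $\{1,\ldots,r\}$; one labels all coordinates in $P_l$ with the $l$-th of the five regular $k$-tuples supplied by \textbf{(Orb-solv)}, so that any element fixing the assembled point must stabilize the partition, hence lies in the base group, hence is trivial coordinatewise (Lemma~\ref{WreathWithSolvableBase}). Without this device the claim that ``the $\ge 5$ regular orbits supply exactly the freedom needed'' has no content; the threshold $5$ is the number of parts in Seress' asymmetric partition, which Zenkov's example alone does not reveal.

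Two further points. Your treatment of an abelian minimal normal subgroup $M$ is both vacuous (after your own reduction to $S_G=\{e\}$ no such $M$ exists) and confused: since $M\le S(G)\le S_G\le S^g$ for every $g$, the target intersection $S_G$ already contains $M$, so there are no ``points inside the fibres over $M$'' to separate, and Seress' bound for solvable linear groups is not needed --- the paper simply passes to $G/M$ and pulls back. Relatedly, your strengthened induction hypothesis $\Reg_S(G,k)\ge 5$ for \emph{all} groups occurring in the induction is neither needed (the $\ge 5$ is only ever consumed at the level of the base group $\Aut_G(L_1)$ of the wreath product, where it is a hypothesis of \textbf{(Orb-solv)}) nor justified: you give no argument that it survives any of the inductive steps, and carrying it would require a separate statement in the style of Lemma~\ref{LowerBoundForReg}.
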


The author of the paper insert to
the ``Kourovka notebook'' \cite{Kour} the following problem 17.41.

\begin{prob}\label{mainprob}
 Let $S$ be a solvable subgroup of a group $G$ with $S(G)=\{e\}$.
\begin{itemize}
 \item[(a)] (L.Babai, A.J.Goodman, L.Pyber) Do there exist $7$ conjugates of
$S$ such that their intersection is trivial?
\item[(b)]  Do  there exist $5$ conjugates of $S$ such that their intersection
is
trivial?
\end{itemize}
\end{prob}

Theorem \ref{MainTheorem} reduces both parts of Problem \ref{mainprob} to
the investigation of
almost simple groups.  Notice also that Theorem \ref{MainTheorem} generalizes
the main
result of \cite{VdovinZenkov} in the following way.

\begin{cor}\label{VdoZenkovGeneralization}
 Let $G$ be a  group possessing a solvable $\pi$-Hall subgroup $H$. Assume that
for $k=5$ condition
{\em\bfseries(Orb-solv)} holds.
Then $\Base_H(G)\le 5$.
\end{cor}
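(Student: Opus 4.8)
The plan is to embed $H$ into a maximal solvable subgroup and transfer the bound supplied by Theorem~\ref{MainTheorem}. First I would fix a maximal solvable subgroup $S$ of $G$ with $H\le S$; such $S$ exists because $H$ is solvable. Since $H$ is a $\pi$-Hall subgroup, the normal $\pi$-subgroup $O_\pi(G)$ lies in $H$, while $H_G$, being itself a normal $\pi$-subgroup, lies in $O_\pi(G)$; hence $H_G=O_\pi(G)$. A composition series refining a chief series always exists, and condition {\bfseries(Orb-solv)} is assumed for $k=5$, so Theorem~\ref{MainTheorem} applies and gives $\Base_S(G)\le 5$, i.e. there are $x_1,\ldots,x_5\in G$ with $S^{x_1}\cap\ldots\cap S^{x_5}=S_G$.

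Next I would pass from $S$ to $H$. For the same elements set $D:=H^{x_1}\cap\ldots\cap H^{x_5}$; then $D$ is a $\pi$-subgroup of $S_G$ containing $O_\pi(G)$, and since $S_G\unlhd G$ one has $H^{x_i}\cap S_G=(H\cap S_G)^{x_i}$, whence $D\le (H\cap S_G)^{x_1}\cap\ldots\cap(H\cap S_G)^{x_5}$, an intersection of conjugate $\pi$-Hall subgroups of the solvable group $S_G$ (here $O_\pi(S_G)=O_\pi(G)$ and $H\cap S_G$ is $\pi$-Hall in $S_G$). By Dolfi's theorem~\cite{Dolfi}, three suitably chosen conjugates of $H\cap S_G$ inside $S_G$ already intersect in $O_\pi(G)$. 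When the solvable radical $S(G)$ is trivial we have $S_G=1$, the stabilizer pulls back directly, and $D=H_G$ with no further work; the entire difficulty is concentrated in the case of a nontrivial solvable core.

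The main obstacle is exactly the simultaneous choice of the five conjugates: an arbitrary base for $S$ need not reduce the $\pi$-part sitting inside $S_G$ (for $G=\Sym_3\times\Alt_5$ with $\pi=\{2\}$ a careless choice leaves an involution in $D$), so the naive inequality $\Base_H(G)\le\Base_S(G)$ is not available. I expect this to be resolved precisely by the regularity clause $\Reg\ge 5$ of {\bfseries(Orb-solv)}: the surplus of regular bases it forces, in the manner of~\cite{VdovinZenkov}, gives enough freedom to select five conjugates of $S$ whose induced conjugates of $H\cap S_G$ realize Dolfi's three-fold reduction inside $S_G$, three being comfortably fewer than five. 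Fitting both requirements into a single budget of five conjugates, so that $D=O_\pi(G)=H_G$ and therefore $\Base_H(G)\le 5$, is the delicate point of the argument.
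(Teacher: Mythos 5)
Your setup coincides with the paper's: embed $H$ in a maximal solvable subgroup $S$, apply Theorem~\ref{MainTheorem} to obtain $x_1,\ldots,x_5$ with $S^{x_1}\cap\ldots\cap S^{x_5}=S(G)$ (and indeed $S_G=S(G)$ for maximal solvable $S$), note $H_G=O_\pi(G)$, and plan to finish with Dolfi's theorem inside the solvable radical. You also correctly isolate the real difficulty: the $x_i$ only control the image modulo $S(G)$, so the residual $\pi$-group $H^{x_1}\cap\ldots\cap H^{x_5}$, though contained in $S(G)$, may strictly exceed $O_\pi(G)$. But at exactly this point the proof stops: you ``expect'' the clause $\Reg\ge 5$ of {\bfseries(Orb-solv)} to provide enough freedom to choose the five conjugates compatibly, and you never carry out that selection. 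This is a genuine gap, and the expectation is misplaced: the regularity clause plays no role in the corollary at all --- it is consumed entirely inside the proof of Theorem~\ref{MainTheorem} (via Lemma~\ref{WreathWithSolvableBase}).

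The missing ingredient is Hall's conjugacy theorem in the solvable group $S(G)$, used to re-conjugate \emph{inside the radical without disturbing the quotient}. Set $K=H\cap S(G)$, a $\pi$-Hall subgroup of $S(G)$; Dolfi gives $y_1,\ldots,y_5\in S(G)$ (three distinct ones suffice) with $K^{y_1}\cap\ldots\cap K^{y_5}=O_\pi(S(G))=O_\pi(G)$. For each $i$, both $K^{y_i}$ and $S(G)\cap H^{x_i}$ are $\pi$-Hall subgroups of the solvable group $S(G)$, hence conjugate by some $z_i\in S(G)$, so that $K^{y_i}=(S(G)\cap H^{x_i})^{z_i}=S(G)\cap H^{x_iz_i}$. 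Since $z_i\in S(G)$, the images of $H^{x_iz_i}$ and $H^{x_i}$ in $G/S(G)$ coincide, whence $H^{x_1z_1}\cap\ldots\cap H^{x_5z_5}\le S(G)$; therefore this intersection equals $K^{y_1}\cap\ldots\cap K^{y_5}=O_\pi(G)=H_G$, giving $\Base_H(G)\le 5$. This two-step adjustment --- first fix the quotient with the $x_i$, then correct each conjugate by an element of $S(G)$ to realize the Dolfi configuration --- is what your proposal needs to be complete.
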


We prove the corollary in Section 3 of the article.

We remark that recently it was proved by T.C.Burness, M.W.Liebeck, E.O'Brien,
A.Shalev, R.A.Wil\-son, etc that if $G$ is a
primitive group of almost simple type and the action is not standard, then $G$
has the base size at most $7$, answering a conjecture of Peter Cameron (see
\cite{BurnessLiebeckShalev} and the bibliography thereafter). In light of
Theorem~\ref{MainTheorem}, these
results seem to be relevant to a solution of Problem \ref{mainprob} in finite
almost simple groups. Nevertheless they cannot be applied immediately since
arbitrary solvable subgroup of a
symmetric group or of a classical group may lie in a maximal subgroup
giving a standard action.

\section{Notation and preliminary results}

By $\vert G\vert$  we denote the cardinality of $G$.
By $A\splitext B$ we denote a split extension of a group $A$ by a group $B$.
For a group $G$ and a
subgroup $M$ of $\Sym_n$ by $G\wr M$ we always denote the permutation wreath
product. We identify $G\wr M$ with the natural split extension $(G_1\times\ldots\times G_n)\splitext M$, where
$G_1\simeq\ldots\simeq G_m\simeq G$ and $M$ permutes $G_1,\ldots,G_n$. Given
group $G$, we denote by  $S(G)$
the maximal
normal solvable subgroup of $G$. We denote
by $e$ the identity element of~$G$. A group  $G$ is called {\em almost simple}
if there exists a nonabelian simple group $L$ such that $\Inn(L)\leq
G\leq\Aut(L)$.

The following statement is evident.

\begin{lem}\label{maximalsolvable}
If $S$ is a maximal solvable subgroup  of $G$, then $N_G(S)=S$.
\end{lem}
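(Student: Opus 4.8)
The plan is to establish the nontrivial inclusion $N_G(S)\le S$; the reverse inclusion $S\le N_G(S)$ is immediate, since every subgroup normalizes itself and hence $S$ is contained in its own normalizer. So it suffices to show that an arbitrary element $x\in N_G(S)$ already lies in $S$.

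First I would form the subset $S\langle x\rangle=\langle S,x\rangle$. Because $x$ normalizes $S$, conjugation by $x$ permutes $S$, so $S\langle x\rangle$ is genuinely a subgroup of $G$, and moreover $S\unlhd S\langle x\rangle$. The key step is then to observe that the quotient $S\langle x\rangle/S$ is cyclic, being generated by the coset of $x$, and is therefore solvable; combined with the fact that $S$ itself is solvable, the standard closure property (an extension of a solvable group by a solvable group is solvable) shows that $S\langle x\rangle$ is a solvable subgroup of $G$ containing $S$.

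At this point maximality of $S$ among the solvable subgroups of $G$ forces $S\langle x\rangle=S$, and hence $x\in S$. Since $x\in N_G(S)$ was arbitrary, this gives $N_G(S)\le S$, and together with $S\le N_G(S)$ we conclude $N_G(S)=S$.

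I do not expect any real obstacle here: the argument rests only on the observation that $S\langle x\rangle$ is an honest subgroup (guaranteed precisely by $x\in N_G(S)$) and on the elementary fact that solvability is preserved under extensions. An alternative route, should one prefer to avoid working with a single element, is to suppose $N_G(S)>S$, pass to a subgroup of prime order in the nontrivial quotient $N_G(S)/S$, and derive the same contradiction from its (solvable) preimage; but the direct element-wise argument above is the cleanest, and I would present that one.
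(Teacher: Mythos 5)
Your argument is correct and is exactly the standard one: since $x\in N_G(S)$, the set $S\langle x\rangle$ is a subgroup with $S$ normal in it and cyclic (hence solvable) quotient, so it is solvable and maximality forces $x\in S$. The paper declares this lemma evident and gives no proof, and what you wrote is precisely the evident argument being alluded to.
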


\begin{lem}\label{InducedAutomorphismsUnderHomomorphism} {\em \cite[Lemma~1.2]{VdoAlmSimp}}
Let $H$ be a normal subgroup of a group $G$, and let
$(A/H)/(B/H)$ be a composition factor of~$G/H$.

Then $\Aut_G(A/B)\simeq \Aut_{G/H}((A/H)/(B/H))$.
\end{lem}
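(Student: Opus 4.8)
The plan is to realize both sides as the image of one and the same normalizer and to check that the two action homomorphisms are intertwined by the canonical isomorphism $A/B\simeq (A/H)/(B/H)$. First I would unwind the correspondence theorem: since $(A/H)/(B/H)$ is a composition factor of $G/H$, there are subgroups $H\le B\le A\le G$ with $B\unlhd A$ and $A/B$ simple, and the third isomorphism theorem supplies a canonical isomorphism
\begin{equation*}
\varphi\colon A/B\to (A/H)/(B/H),\qquad Ba\mapsto (B/H)(Ha).
\end{equation*}
Conjugation by $\varphi$ then yields an isomorphism $\varphi_*\colon\Aut(A/B)\to\Aut((A/H)/(B/H))$, $\alpha\mapsto\varphi\alpha\varphi^{-1}$, and the whole task reduces to showing that $\varphi_*$ carries $\Aut_G(A/B)$ onto $\Aut_{G/H}((A/H)/(B/H))$.

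Next I would compute the relevant normalizers in the quotient. Because $H\unlhd G$ and $H\le B\le A$, for $g\in G$ one has $(A/H)^{Hg}=A^g/H$, so $Hg$ normalizes $A/H$ if and only if $g\in N_G(A)$; hence $N_{G/H}(A/H)=N_G(A)/H$, and likewise $N_{G/H}(B/H)=N_G(B)/H$. Intersecting the two gives
\begin{equation*}
N_{G/H}\big((A/H)/(B/H)\big)=N_G(A/B)/H.
\end{equation*}
In particular the natural projection $\pi\colon G\to G/H$ restricts to a surjection $N_G(A/B)\to N_{G/H}((A/H)/(B/H))$ with kernel $H$.

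The heart of the argument is to verify that $\varphi_*$ intertwines the two action homomorphisms. Write $\eta\colon N_G(A/B)\to\Aut(A/B)$ and $\bar\eta\colon N_{G/H}((A/H)/(B/H))\to\Aut((A/H)/(B/H))$ for the homomorphisms from the definitions of $G$-induced and $(G/H)$-induced automorphisms. Taking $x\in N_G(A/B)$ and $a\in A$, and recalling that $Hx$ acts on $(A/H)/(B/H)$ by $(B/H)(Ha)\mapsto (B/H)(Hx^{-1}ax)$, a direct computation gives
\begin{equation*}
\varphi_*\big(\eta(x)\big)\big((B/H)(Ha)\big)=(B/H)(Hx^{-1}ax)=\bar\eta\big(\pi(x)\big)\big((B/H)(Ha)\big),
\end{equation*}
so that $\varphi_*\circ\eta=\bar\eta\circ\pi$ as maps on $N_G(A/B)$.

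Finally I would read off the conclusion: since $\pi$ is surjective onto $N_{G/H}((A/H)/(B/H))$, the relation $\varphi_*\circ\eta=\bar\eta\circ\pi$ forces the image of $\bar\eta$ to equal $\varphi_*$ applied to the image of $\eta$, i.e. $\varphi_*(\Aut_G(A/B))=\Aut_{G/H}((A/H)/(B/H))$, and as $\varphi_*$ is an isomorphism its restriction gives the stated isomorphism. I expect the only delicate points to be the normalizer computation and the intertwining identity; both are routine once the conjugation action of $Hx$ on $(A/H)/(B/H)$ is written out carefully, and it is worth noting that the simplicity of $A/B$ is never used beyond guaranteeing that the objects in play are genuine composition factors, so the same argument establishes the isomorphism for any $B\unlhd A$ with $H\le B$.
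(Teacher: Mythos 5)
Your proof is correct, and there is no in-paper argument to diverge from: the paper does not prove this lemma but imports it verbatim from \cite[Lemma~1.2]{VdoAlmSimp}. Your three steps --- the canonical isomorphism $\varphi\colon A/B\to (A/H)/(B/H)$ from the third isomorphism theorem, the identification $N_{G/H}\bigl((A/H)/(B/H)\bigr)=N_G(A/B)/H$ via the correspondence theorem, and the intertwining relation $\varphi_*\circ\eta=\bar\eta\circ\pi$ combined with surjectivity of $\pi$ onto the quotient normalizer --- constitute exactly the standard proof, and your closing observation that simplicity of $A/B$ is never used (so the isomorphism holds for any section $A/B$ with $H\leq B\unlhd A$) is accurate.
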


\begin{lem}\label{NormalizerIntersection}
 Let $S$ be a maximal solvable subgroup of $G$ and let $N$ be a normal subgroup of $G$ containing $S(G)$. Then $N_N(N\cap S)=N\cap S$.
\end{lem}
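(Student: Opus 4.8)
The plan is to argue by induction on $|G|$, proving $N_N(T)=T$ for $T:=N\cap S$ (the hypothesis $S(G)\le N$ turns out not to be needed for this). Since $N\unlhd G$, conjugation by $S$ stabilises $N$, so $T\unlhd S$ and hence $S\le N_G(T)$. Put $W:=N_N(T)=N\cap N_G(T)$. Normality of $N$ gives $W\unlhd N_G(T)$, while $W\cap S=N\cap S=T$; thus $WS\le N_G(T)$ is a subgroup with $W\unlhd WS$ and $WS/W\cong S/T$ solvable. The inclusion $T\le W$ is clear, so the whole content is the reverse inclusion $W\le T$.

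The inductive step splits according to the position of $T$. If $T$ is not normal in $G$, then $G':=N_G(T)$ is a proper subgroup, $S$ is still maximal solvable in $G'$, and $W=N\cap G'\unlhd G'$ with $W\cap S=T$; applying the inductive hypothesis to $(G',W,S)$ gives $N_W(T)=T$, and since $W$ normalises $T$ by definition this forces $W=T$, as required. If instead $T\unlhd G$ and $T\ne\{e\}$, I pass to $\bar G:=G/T$: here $\bar S:=S/T$ is maximal solvable, $\bar N:=N/T\unlhd\bar G$, and $\bar N\cap\bar S=(N\cap S)/T=\{e\}$; the inductive hypothesis for $\bar G$ (in the base form below) yields $\bar N=\{e\}$, i.e. $N=T$, whence $N_N(T)=N=T$. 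This reduces everything to the base case $T=N\cap S=\{e\}$, in which the task is to prove $N=\{e\}$.

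The base case is the crux, and it suffices to show that $S$ meets every \emph{nonabelian} minimal normal subgroup $V$ of $G$ nontrivially. Indeed, an abelian minimal normal subgroup contained in $N$ would be solvable and normal, hence contained in $S(G)\le S$ and in $N$, forcing it into $N\cap S=\{e\}$; so if $N\ne\{e\}$ it contains a nonabelian minimal normal $V$ with $V\cap S\le N\cap S=\{e\}$, and ruling this out yields $N=\{e\}$. Assume then $V\cap S=\{e\}$, with $V=L_1\times\dots\times L_m$ a product of nonabelian simple groups permuted by $S$. In $H:=VS=V\rtimes S$ the subgroup $S$ is still maximal solvable, so it is enough to produce a nontrivial $S$-invariant solvable subgroup $R\le V$: then $R\unlhd RS$ with $RS/R\cong S$ solvable, so $RS$ is a solvable subgroup strictly larger than $S$, a contradiction.

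To build $R$ I would use a Frattini descent. First one proves, by induction on $|Y|$, the auxiliary fact that if $K\unlhd Y$ with $Y/K$ solvable then $K$ contains a nontrivial solvable subgroup $D$ with $N_Y(D)K=Y$: when $K$ is solvable take $D=K$, and otherwise pick a prime $p\mid|K|$ with a non-normal $P\in\mathrm{Syl}_p(K)$ and pass, via the Frattini argument $Y=K\,N_Y(P)$, to the smaller pair $(N_Y(P),N_K(P))$. Applied to $(H,V)$ this gives a nontrivial solvable $D\le V$ whose normalizer covers $H/V\cong S$. The main obstacle is that $D$ is only known to be normalised by \emph{some} complement of $V$, whereas I need an honestly $S$-invariant $R$; making $D$ invariant under a conjugate of $S$ amounts to a fixed-point statement for the solvable group $S$ acting on the sets $\mathrm{Syl}_p(V)$, which the Frattini argument alone does not deliver. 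I expect this invariance step to be the hard part, to be settled by coprime-action (Glauberman-type) arguments together with the structure of solvable subgroups of almost simple groups. Once a nontrivial $S$-invariant solvable $R\le V$ is secured, the contradiction with the maximality of $S$ closes the base case and completes the induction.
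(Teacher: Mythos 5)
Your overall induction scheme (descending to $N_G(T)$ when $T=N\cap S$ is not normal in $G$, and to $G/T$ when it is normal and nontrivial) is sound, and it correctly isolates the crux: if $V$ is a nonabelian minimal normal subgroup of $G$ and $S$ is a maximal solvable subgroup, then $V\cap S\ne\{e\}$. But that crux is exactly where your argument stops being a proof. The statement you need is Proposition~3 of Guralnick and Wilson \cite{GurWil}, which the paper does not reprove but simply cites --- it is the one nontrivial external input in its proof of Lemma~\ref{NormalizerIntersection} (``In view of \cite[Proposition~3]{GurWil}, $N\cap S\not=\{e\}$''). Your Frattini descent only produces a nontrivial solvable $D\le V$ with $N_H(D)V=H$; this says $D$ is normalized by a subgroup \emph{covering} $H/V$, not by $S$, and not even necessarily by a conjugate of $S$, since $N_H(D)$ need not be solvable nor split over $N_V(D)$. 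You acknowledge this and defer the $S$-invariance to unspecified ``coprime-action (Glauberman-type) arguments''; no such argument is supplied, and none is routine --- the action of $S$ on $\mathrm{Syl}_p(V)$ is not coprime in general, so Glauberman's lemma does not apply. This is a genuine gap at the essential step.

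For comparison: granting the citation of \cite[Proposition~3]{GurWil}, the paper's argument is different from yours and complete. It takes a minimal counterexample, reduces to $S(G)=\{e\}$, and works inside $M=N_G(N\cap S)$, which is a proper subgroup because $S(M)\ge N\cap S\ne\{e\}$; applying minimality to $M$ with the normal subgroup $LS(M)$, where $L=N_N(N\cap S)$, it shows every element of $L$ normalizes the self-normalizing subgroup $(S\cap L)S(M)$ of $LS(M)$ and hence lies in $S$, giving $L=N\cap S$. If you replace your unfinished base case by the Guralnick--Wilson citation, your induction does go through (and, as you observe, without the hypothesis $S(G)\le N$, which the paper uses only in its preliminary reduction to $S(G)=\{e\}$); but as written the proposal does not establish the lemma.
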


\begin{proof}
Assume that the claim is false and $G$ is a counter example of minimal order. Assume that $S(G)\not=\{e\}$ and
consider the natural
homomorphism
\begin{equation*}
\overline{\phantom{G}}:G\rightarrow G/S(G).
\end{equation*}
Clearly $\overline{S}$ is a maximal solvable subgroup
of $\overline{G}$ and
$S(\overline{G})=\overline{S(G)}=\{e\}$. Moreover, $\vert \overline{G}\vert<\vert G\vert$. Since $G$ is a counter example of minimal order
it follows that
$N_{\overline{N}}(\overline{N}\cap\overline{S})=\overline{N}\cap\overline{S}$. Now $S(G)$ lies in both $N$ and $S$, hence $N_N(N\cap S)$ is
a complete preimage of $N_{\overline{N}}(\overline{N}\cap\overline{S})=\overline{N}\cap\overline{S}$, and so $N_N(N\cap S)=N\cap S$. Thus
$S(G)=\{e\}$.

Set $M=N_G(N\cap S)$, $L=N_N(N\cap S)=N\cap M$. In view of \cite[Proposition~3]{GurWil}, $N\cap S\not=\{e\}$, so
$S(M)\ge S\cap N\not=\{e\}$ and $M$ is a proper subgroup
of $G$. Clearly $S\leq M$, so the maximality of $S$ implies $S(M)\leq  S$.
Moreover $L$ is normal in $M$. So $LS(M)$ is normal in $M$. Since $\vert
M\vert<\vert
G\vert$, we obtain
\begin{equation*}
N_{LS(M)}(S\cap LS(M)))=S\cap LS(M)=(S\cap L)S(M)\leq S.
\end{equation*}
 Now suppose that $x\in L$. We have $N\cap S\leq L\leq N$, so  $L\cap S=N\cap
S$. By construction,  $L=N_N(L\cap S)$, so $L\cap S\unlhd L$. Moreover $L\leq
M$, hence $x$ normalizes $S(M)$, and so $x$ normalizes
$(S\cap L)S(M)=N_{LS(M)}(S\cap LS(M)))$, in particular, $x\in
S$. Thus~${L=S\cap N}$ and $G$ is not a counter example.
\end{proof}

Assume that a group $G$ possesses a normal subgroup
$T$  satisfying the following conditions:

\begin{itemize}
\item[(C1)]  there exists a nonabelian simple group $L$ such that $T\simeq
L_1\times
\ldots\times
L_k$ and $L_1\simeq\ldots\simeq L_k\simeq L$;
\item[(C2)] the subgroups $L_1,\ldots,L_k$ are conjugate in $G$;
\item[(C3)] $C_G(T)=\{e\}$.
\end{itemize}

By \cite[Satz~12.5, p.~69]{Huppert}, $G$ acting by conjugation on $T$ permutes
$L_1,\ldots,L_k$.
Condition~(C2) implies that  $N_G(L_1),\ldots,N_G(L_k)$ are conjugate in $G$. It
follows  that $G$ acts on the right cosets of $N_G(L_1)$ by
right multiplication, let $\rho:G\rightarrow \Sym_k$ be the corresponding
permutation representation. The action by right
multiplication of $G$ on the right cosets of
$N_G(L_1)$ coincides with the action by conjugation of $G$ on the set
$\{L_1,\ldots,L_k\}$, and $G\rho$ is a transitive subgroup of
 $\Sym_k$. By \cite[Hauptsatz~1.4, p.~413]{Huppert}
there exists a monomorphism
\begin{equation*}
\varphi:G\rightarrow (N_G(L_1)\times \ldots\times N_G(L_k)): (G\rho)=N_G(L_1)\wr
(G\rho)=M.
\end{equation*}
Since $C_G(L_i)$ is a normal subgroup of $N_G(L_i)$, it follows that
$C_G(L_1)\times\ldots\times C_G(L_k)$ is a normal
subgroup of $M$.
Consider the
natural homomorphism
\begin{equation*}
 \psi:M\rightarrow M/(C_G(L_1)\times\ldots\times C_G(L_k)).
\end{equation*}
 Denoting
$\Aut_G(L_i)=N_G(L_i)/C_G(L_i)$ by $A_i$ we obtain a homomorphism
\begin{equation*}
 \varphi\circ\psi:G\rightarrow (A_1\times\ldots\times
A_k):(G\rho)\simeq A_1\wr (G\rho)=:\overline{G}.
\end{equation*}
As $C_G(T)=\{e\}$, the kernel
of $\varphi\circ\psi$ is equal to  $C_G(L_1,\ldots,L_k)=\{e\}$,
i.~e.,
$\varphi\circ\psi$ is a monomorphism and we
identify  $G$ and subgroups of $G$ with
their images under~$\varphi\circ\psi$.

\begin{lem}\label{NonAbelianSimpleDirectAutomorphisms}
Assume that $G$ possesses a normal subgroup $T$ satisfying conditions
{\em (C1), (C2), and (C3)}. Assume also that $G/T$ is
solvable. Consider the monomorphism  $\varphi\circ\psi$ defined above. Then the
followings hold:
\begin{itemize}
 \item[{\em (a)}] there exists a maximal solvable subgroup $S$ of $G$ such that
$G=ST$;
\item[{\em (b)}] if we choose a maximal solvable subgroup $S$ of $G$ such that
$G=ST$, then $\overline{G}$ possesses a maximal solvable subgroup
$\overline{S}$ such that $S\leq \overline{S}$ and $\overline{G}=\overline{S} T$.
\end{itemize}
\end{lem}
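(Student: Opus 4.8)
The plan is to handle (a) and (b) separately, using two elementary consequences of the wreath decomposition $\overline{G}=A_1\wr(G\rho)$ recorded above. Write $B=A_1\times\ldots\times A_k$ for the base group. Then, after the identification, the socle $T=\Inn(L_1)\times\ldots\times\Inn(L_k)$ satisfies $T\le B$ with $B/T=\prod_iA_i/\Inn(L_i)$ solvable (each factor embeds into $\Aut(L)/\Inn(L)$, which is solvable); moreover $\overline{G}=GB$, since $B$ is the kernel of $\overline{G}\to G\rho$ while $G$ maps onto $G\rho$. Recall also that $G\rho$ is solvable, being a quotient of $G/T$ (the subgroup $T$ fixes every component). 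The one genuinely deep ingredient I will use, and only for (a), is that the almost simple group $A_1$ admits a solvable subgroup supplementing its socle $\Inn(L_1)$.

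For part (a) it suffices to produce a solvable $S_0\le G$ with $S_0T=G$, since any maximal solvable $S\ge S_0$ then satisfies $ST\supseteq S_0T=G$. First I would build such a supplement in $\overline{G}$, which is easy: choosing a solvable $R\le A_1$ with $R\cdot\Inn(L_1)=A_1$, the direct power $R\times\ldots\times R\le B$ is invariant under the coordinate permutations $G\rho$, so $\overline{S}_0'=(R\times\ldots\times R)\splitext(G\rho)$ is a solvable subgroup of $\overline{G}$ that covers $\overline{G}/T$ (the copies of $R$ cover $B/T$ and the complement $G\rho$ is present); hence $\overline{S}_0'T=\overline{G}$. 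Then I would descend to $G$ by Dedekind's modular law: since $T\le G$,
\begin{equation*}
G=\overline{G}\cap G=(\overline{S}_0'T)\cap G=(\overline{S}_0'\cap G)T,
\end{equation*}
so $S_0:=\overline{S}_0'\cap G$ is solvable and supplements $T$ in $G$, proving (a).

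For part (b) I am given $S$ with $ST=G$ and must enlarge it, so I cannot simply import the supplement from (a); instead I would build a solvable supplement of $\overline{G}$ visibly containing $S$. Since $T\rho=\{e\}$ and $G=ST$, the image $S\rho=G\rho$ is transitive; set $S_1=S\cap N_G(L_1)$ and pick $t_i\in S$ with $L_1^{t_i}=L_i$. Because $T\le N_G(L_1)$, Dedekind's law gives $N_G(L_1)=S_1T$, whence the image $\overline{S}_1$ of $S_1$ in $A_1$ satisfies $\overline{S}_1\cdot\Inn(L_1)=A_1$ — so here the hypothesis $ST=G$ manufactures its own socle-supplement and no external input is needed. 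Spreading it equivariantly, put $R_i:=\overline{S}_1^{\,t_i}\le A_i$ and $D:=R_1\times\ldots\times R_k\le B$. One checks $D^s=D$ for all $s\in S$: writing $L_1^{t_is}=L_j$ one gets $t_is=t_ju$ with $u\in S_1$, and since conjugation by $u$ acts on $A_1$ as the inner automorphism by its image in $\overline{S}_1$, one has $\overline{S}_1^{\,u}=\overline{S}_1$, so $R_i^s=R_j$. Thus $S$ normalizes $D$, the group $D$ is solvable with $DT=B$, and $\overline{S}_0:=SD$ is solvable, contains $S$, and
\begin{equation*}
\overline{S}_0T=S(DT)=SB\supseteq(ST)B=GB=\overline{G}.
\end{equation*}
Enlarging $\overline{S}_0$ to a maximal solvable subgroup $\overline{S}$ of $\overline{G}$ yields $S\le\overline{S}$ and $\overline{S}T=\overline{G}$, as required.

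The routine points — solvability of the assembled groups and their surjectivity onto $\overline{G}/T$ — all reduce to the noted facts that $G\rho$ and $B/T$ are solvable, and to the modular law. The only real obstacle is the structural input used in (a): that an almost simple group possesses a solvable subgroup supplementing its socle (equivalently, that its solvable outer automorphism group lifts to a solvable subgroup). This rests on the classification and the explicit structure of torus/Singer normalizers covering outer automorphisms; part (b), by contrast, is self-contained, the supplement being extracted from the given $S$ via Dedekind's law and then transported across the components by the transitive action.
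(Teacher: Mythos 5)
Your part (a) has a genuine gap, and it is an unnecessary one. You declare as ``the one genuinely deep ingredient'' the existence of a solvable subgroup $R\le A_1$ with $R\cdot\Inn(L_1)=A_1$, attribute it to the classification and to torus/Singer normalizers, and leave it unproved. The paper needs no such input: it takes a subgroup $M\le G$ \emph{minimal} subject to $MT=G$, observes that $M\cap T\unlhd M$ must lie in $\Phi(M)$ (else a maximal subgroup $M_1<M$ not containing $M\cap T$ would satisfy $M_1(M\cap T)=M$, hence $M_1T=G$, contradicting minimality), and concludes that $M$ is solvable because $\Phi(M)$ is nilpotent and $M/(M\cap T)\simeq G/T$ is solvable by hypothesis; any maximal solvable $S\ge M$ then gives $ST=G$. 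This Frattini/minimal-supplement argument applied directly to the pair $(G,T)$ makes your entire detour through $\overline{G}$, the base group, and Dedekind's law superfluous. (The fact you cite is in fact true and follows from the very same argument applied to $(A_1,\Inn(L_1))$ together with the Schreier conjecture, not from any torus-normalizer analysis --- but as written your (a) rests on an unsubstantiated claim.)

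Your part (b) is essentially correct and takes a genuinely different route from the paper. The paper works with $H=S\cap T$, shows $H=H_1\times\ldots\times H_k$ for the projections $H_i=H^{\pi_i}$, invokes Lemma~\ref{NormalizerIntersection} to get $N_{L_i}(H_i)=H_i$, and produces the solvable supplement as $N_{\overline{G}}(H)=N_{A_1\times\ldots\times A_k}(H)$. You instead extract the supplement from $S_1=S\cap N_G(L_1)$ via Dedekind's law ($N_G(L_1)=S_1T$, so $\overline{S}_1\cdot\Inn(L_1)=A_1$) and transport it across the components by the transitive $S$-action to form $D=R_1\times\ldots\times R_k$; this avoids Lemma~\ref{NormalizerIntersection} entirely and is arguably more direct, at the price of the slightly fussy equivariance check $D^s=D$. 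One slip there: from $L_1^{t_is}=L_j=L_1^{t_j}$ you get $t_is=ut_j$ with $u=t_ist_j^{-1}\in S_1$, not $t_is=t_ju$ with $u\in S_1$ (for the latter, $u=t_j^{-1}t_is$ need not normalize $L_1$); with the correct order the computation $R_i^s=(\overline{S}_1^{\,u})^{t_j}=\overline{S}_1^{\,t_j}=R_j$ goes through exactly as you intend, so this is a transposition of factors rather than a flaw in the idea.
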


\begin{proof}
(a) Consider a minimal subgroup $M$ of $G$ such that $G=MT$. Clearly $M\cap
T$ is normal in $M$ and is included in the Frattini subgroup $\Phi(M)$ of $M$.
Otherwise $M$
possesses a
proper subgroup $M_1$ such that
$M_1(M\cap T)=M$ and so $G=M_1T$, a contradiction with the minimality of $M$.
Since $\Phi(M)$ is nilpotent and $M/(M\cap T)$ is solvable, it follows that $M$
is solvable. Let $S$ be a
maximal solvable subgroup of $G$ containing $M$, then $G=ST$.

(b) Condition (C2) implies $A_i=\Aut_{\overline{G}}(L_i)=\Aut_G(L_i)\simeq
\Aut_G(L_1)$ for all $i$. Since $[L_i,L_j]=\{e\}$ for $i\not=j$ and $G=ST$, we
obtain that
\begin{equation*}
A_i=\Aut_G(L_i)=N_G(L_i)/C_G(L_i)= N_S(L_i)T/C_G(L_i),
\end{equation*}
 and so $A_i/L_i\simeq N_S(L_i)/(N_S(L_i)\cap L_iC_G(L_i))$ is
solvable. Therefore $\overline{G}/(L_1\times\ldots\times L_n)\simeq (A_1/L_1)\wr
(G\rho)$ is solvable. Consider $H=S\cap T$ and denote by $\pi_i$ the natural
projection $L_1\times\ldots\times L_k\rightarrow L_i$. Put $H_i=H^{\pi_i}$.
Clearly, $H\leq H_1\times \ldots\times H_k$. If $x\in S$ and $L_i^x=L_j$, then
$H_i^x=H_j$, since $H$ is normal in $S$. Hence $S$ normalizes $H_1\times
\ldots\times H_k$, and by the maximality of
$S$ we have $S\ge H_1\times\ldots\times H_k$, i.e., $H=H_1\times\ldots\times
H_k$. Clearly
\begin{equation*}
N_T(H)=N_{L_1\times\ldots\times L_k}(H_1\times\ldots\times
H_k)=N_{L_1}(H_1)\times\ldots\times N_{L_k}(H_k).
\end{equation*} By Lemma \ref{NormalizerIntersection} we have $N_T(H)=H$, so
$N_{L_i}(H_i)=H_i$ for $i=1,\ldots,k$. As $N_S(L_i)\leq N_{A_i}(H_i)$, it
follows that
$A_i$ is equal to $N_{A_i}(H_i)L_i$ and $N_{A_i}(H_i)$ is solvable.  We obtain
that
\begin{equation*}
A_1\times\ldots\times A_k=(N_{A_1}(H_1)\times\ldots\times
N_{A_k}(H_k))T=N_{A_1\times\ldots\times A_k}(H)T
\end{equation*}
 and $N_{A_1\times\ldots\times A_k}(H)$
is solvable. Since
$\overline{G}=(A_1\times\ldots\times A_k)S$, and since $S$ normalizes $H$, it
follows that
$S$ lies in $N_{\overline{G}}(H)$, and so $\overline{G}=N_{\overline{G}}(H)T$.
Moreover $N_{\overline{G}}(H)=N_{A_1\times\ldots\times A_k}(H)$ is solvable,
therefore there exists a maximal
solvable subgroup $\overline{S}$ of $\overline{G}$, containing
$N_{\overline{G}}(H)$. Thus we obtain that  $S\leq \overline{S}$ and
$\overline{G}=\overline{S}T$.
\end{proof}

Let  $G$ be a subgroup of $\Sym_n$. A partition $P_1\sqcup P_2\sqcup\ldots\sqcup
P_m$ of $\{1,\ldots,n\}$ is called an
{\em asymmetric partition} for  $G$, if only the identity element of $G$ fixes the partition, i.~e., the equality $P_jx=P_j$ for all
$j=1,\ldots,m$ implies  $x=e$. Clearly for every $G$  the partition $P_1=\{1\},
P_2=\{2\},\ldots,P_n=\{n\}$ is  asymmetric.

\begin{lem}{\rm \cite[Theorem~1.2]{Seress}}
\label{Seresslemm}
Let $G$ be a solvable subgroup of $\Sym_n$.  Then there exists an asymmetric
partition $P_1\sqcup P_2 \sqcup\ldots\sqcup
P_m=\{1,\ldots,n \}$ with~${m\le 5}$.
\end{lem}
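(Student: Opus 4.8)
The plan is to restate the lemma in the language of distinguishing colourings and argue by induction on $n$. An asymmetric partition with $m$ parts is exactly a colouring $c\colon\{1,\ldots,n\}\to\{1,\ldots,m\}$ whose only colour-preserving element of $G$ is the identity: the parts are the colour classes, and ``fixing the partition'' means fixing each class setwise. First I would reduce to the transitive case. Let $\Omega_1,\ldots,\Omega_r$ be the $G$-orbits and let $G_i\le\Sym(\Omega_i)$ be the group induced by $G$ on $\Omega_i$; each $G_i$ is solvable and transitive, and $g\mapsto(g|_{\Omega_1},\ldots,g|_{\Omega_r})$ embeds $G$ into $G_1\times\ldots\times G_r$ because $G$ acts faithfully on $\Omega=\Omega_1\sqcup\ldots\sqcup\Omega_r$. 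Colouring each $\Omega_i$ by an asymmetric partition for $G_i$ from the common colour set $\{1,\ldots,5\}$, any $g$ fixing every global colour class fixes the classes on each $\Omega_i$ separately, hence induces the identity on every $\Omega_i$, hence $g=e$. So it suffices to treat transitive $G$.

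For the base case of the induction, suppose $G$ is transitive and primitive. Then \cite[Theorem~1.1]{Seress} gives $\Base(G)\le4$, so there are points $x_1,\ldots,x_b$ with $b\le4$ whose pointwise stabiliser in $G$ is trivial. The partition $\{x_1\},\ldots,\{x_b\}$ together with the remaining points $\{1,\ldots,n\}\setminus\{x_1,\ldots,x_b\}$ has at most $b+1\le5$ parts, and any element fixing it must fix each singleton, hence fix every $x_i$, hence lie in the trivial pointwise stabiliser; so this partition is asymmetric. This is precisely where the bound $5$ enters, as $4+1$.

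For the inductive step, suppose $G$ is transitive and imprimitive, and choose a \emph{maximal} block system $\mathcal B=\{B_1,\ldots,B_t\}$, so that the induced action $\bar G\le\Sym_t$ on the blocks is primitive (and solvable, on $t<n$ points). Let $K=\ker(G\to\bar G)$ be the block kernel, a solvable normal subgroup fixing each block setwise, and let $K_i\le\Sym(B_i)$ be the group induced by $K$ on the block $B_i$ of size $<n$. Both $\bar G$ and each $K_i$ fall under the induction hypothesis, so each admits an asymmetric partition with at most $5$ parts. The idea is to build a single colouring of $\Omega$ doing two jobs at once: its restriction to each $B_i$ should be asymmetric for $K_i$, which by the same faithful-subdirect-product argument used in the orbit reduction forces every colour-preserving element of $K$ to be trivial; and the ``colour type'' of each block, say the vector recording how many points of each colour the block carries, should realise an asymmetric partition of $\mathcal B$ for $\bar G$. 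The second condition is meaningful because a colour-preserving $g$ maps $B_i$ bijectively and colour-respectingly onto $B_{i\bar g}$, so the types of $B_i$ and $B_{i\bar g}$ coincide; hence $\bar g$ preserves the type-partition of $\mathcal B$ and is therefore trivial. Combining, a colour-preserving $g$ lies in $K$ and then equals $e$.

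The hard part is the bookkeeping behind this combination while staying within five colours: one must encode a $\bar G$-asymmetric labelling of the blocks — which may need up to five distinct block-labels — into the within-block colour patterns \emph{simultaneously} with keeping each pattern asymmetric for $K_i$, using a total of only five colours. For large blocks there is ample room (reserve the count of one colour to carry the block-label while the other colours handle within-block asymmetry), but for blocks of size $2$ or $3$ the available patterns are scarce, and this forced interaction between the top labelling and the within-block asymmetry is what I expect to be the genuine obstacle. I would isolate these small-block configurations and verify them by direct inspection, exploiting that $\bar G$ is solvable and primitive to keep the number of block-labels that must actually be realised as small as possible.
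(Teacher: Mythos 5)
The paper does not actually prove this lemma: it is quoted verbatim from Seress \cite[Theorem~1.2]{Seress}, so there is no internal argument to compare yours against. Measured against Seress's original proof, your outline reproduces the right architecture (reduction to the transitive case, the primitive case via $\Base(G)\le 4$ from \cite[Theorem~1.1]{Seress} giving $4+1=5$ parts, and an induction over a maximal block system in the imprimitive case), and the orbit reduction and the primitive base case as you write them are correct and complete.

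However, the inductive step is a genuine gap, and you say so yourself: the entire difficulty of the theorem is concentrated in the sentence beginning ``The hard part is the bookkeeping.'' What must be shown is that a single palette of five colours suffices to simultaneously (i) colour each block $B_i$ so that only the identity of $K_i$ preserves the colour classes, and (ii) make the colour types of the blocks separate the at most five parts of a $\bar{G}$-asymmetric partition of $\mathcal{B}$. That requires, for each block size $b$ and each possible within-block group $K_i\le\Sym_b$, a count of how many distinct types are realisable by $K_i$-asymmetric colourings and a verification that at least five pairwise disjoint families of such types exist (or a substitute device when they do not); blocks of size $2$ and $3$ offer very few types, and this is exactly where the constant $5$ is earned rather than the generic $4+1$ of the primitive case. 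Declaring that you ``would isolate these small-block configurations and verify them by direct inspection'' is a plan, not a proof: no inspection is carried out, no list of configurations is produced, and no uniform argument (e.g.\ permuting the five colours to multiply the number of realisable types) is given for large $b$, where $K_i$ ranges over all solvable subgroups of $\Sym_b$. As it stands, the proposal establishes the lemma only for groups all of whose transitive constituents are primitive.
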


\begin{lem}\label{WreathWithSolvableBase}
 Let $G$ be a  group and let $M$ be a solvable subgroup of $\Sym_n$. Assume
that there exists $k$ such that
for every maximal solvable
subgroup $T$ of $G$ the inequalities
\begin{equation*}
 \Base_T(G)\leq k\text{ and }\Reg_T(G,k)=s\ge5
\end{equation*}
hold. Then for every maximal solvable subgroup $S$ of $G\wr M$ we have
$\Base_S(G\wr M)\le k$.
\end{lem}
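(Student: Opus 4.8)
The plan is to read off the structure of a maximal solvable subgroup $S$ of $W=G\wr M$ relative to the base group $B=G_1\times\ldots\times G_n$ and the projection $\pi\colon W\to M$, and then to assemble $k$ conjugates of $S$ meeting in the core $S_W$ out of two ingredients: a per-coordinate base coming from the hypothesis on $G$, and an asymmetric partition of the top group coming from Seress (Lemma~\ref{Seresslemm}).

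First I would determine the shape of $S$. Put $\overline{S}=\pi(S)\le M$, which is solvable, and $H_i=\pi_i(S\cap B)\le G_i$, where $\pi_i$ is the $i$th coordinate projection of $B$. Since $S\cap B$ is normal in $S$ and $S$ permutes the coordinates through $\overline{S}$, the product $D=H_1\times\ldots\times H_n$ is normalized by $S$; as $D$ is solvable and $SD/D\simeq S/(S\cap B)\simeq\overline{S}$ is solvable, $SD$ is solvable, so maximality of $S$ forces $D\le S$ and hence $S\cap B=H_1\times\ldots\times H_n$. Using $N_W(G_i)=G_i\times C_W(G_i)$, let $T_i$ be the image of the point stabilizer $S\cap N_W(G_i)$ under projection onto the direct factor $G_i$. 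A maximality argument in the spirit of Lemma~\ref{NonAbelianSimpleDirectAutomorphisms}(b) — given a solvable overgroup of $T_i$, spread it over the $\overline{S}$-orbit of $i$ and adjoin it to $S$ — shows that each $T_i$ is a \emph{maximal} solvable subgroup of $G_i\simeq G$. This is the step that makes the hypothesis usable, since it is phrased exactly for maximal solvable subgroups; note $H_i\le T_i$, with equality precisely when $\overline{S}$ has trivial point stabilizer actions.

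The combination is where Seress and the regular orbits enter. Apply Lemma~\ref{Seresslemm} to the solvable permutation group $\overline{S}$ to get an asymmetric partition $P_1\sqcup\ldots\sqcup P_m=\{1,\ldots,n\}$ with $m\le 5$, and read the block containing $i$ as a colour $c(i)\in\{1,\ldots,m\}$. For each $i$, the inequality $\Base_{T_i}(G)\le k$ supplies $k$ conjugating elements realizing a base for $G$ with respect to $T_i$, while $\Reg_{T_i}(G,k)\ge 5\ge m$ supplies at least $m$ distinct regular orbits; I would choose coordinate $i$'s base inside the regular orbit indexed by $c(i)$. Assembling these choices coordinate by coordinate (the $B$-components of the conjugators are independent) yields $k$ elements $w_1,\ldots,w_k\in W$, and one verifies $S^{w_1}\cap\ldots\cap S^{w_k}=S_W$ in two layers: the single-coordinate contributions collapse to the core because each coordinate carries a genuine base, while any surviving element with nontrivial image $\tau\in\overline{S}$ would force the configurations at $i$ and at $i^\tau$ to lie in the same regular orbit for every $i$, hence $\tau$ to fix every block setwise — impossible for $\tau\ne e$ by asymmetry. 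Matching the five available regular orbits against the at most five Seress blocks is exactly what lets one family of $k$ conjugates serve every coordinate and the top group simultaneously.

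The main obstacle, as I see it, is making this last verification honest, and it has two facets. The single-coordinate intersections are governed by the base-group parts $H_i$ (indeed $S_W\cap G_i=(H_i)_{G_i}$), whereas the hypothesis only controls the larger maximal solvable subgroups $T_i$; when $\overline{S}$ has nontrivial point stabilizers these differ, and one must show that driving the $T_i$-configurations into distinct regular orbits still forces the $H_i$-intersections down to their cores within the same budget of $k$ conjugates. Secondly, the colouring argument must respect the core: an element of $S_W$ whose top part lies in the core of $\overline{S}$ in $M$ has to survive, so the asymmetric partition (or rather the whole construction) should be set up after passing to $W/S_W$, or chosen compatibly with that core. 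Controlling these two bookkeeping issues so that the count never exceeds $\max_i\Base_{T_i}(G)\le k$ is the crux of the argument.
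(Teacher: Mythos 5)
Your skeleton coincides with the paper's: write $S\cap(G_1\times\ldots\times G_n)=H_1\times\ldots\times H_n$ using maximality of $S$, extract at least five regular orbits per coordinate from the hypothesis on $G$, and combine them with an asymmetric partition from Lemma~\ref{Seresslemm} so that an element fixing the assembled point must preserve every block (hence have trivial top part) and then fix a per-coordinate regular point (hence have trivial base part). But the two ``bookkeeping issues'' you defer are where the actual work lies, and the bridge you propose --- that each $T_i$ is a maximal solvable subgroup of $G_i$ --- is both doubtful and unnecessary. Doubtful, because a solvable overgroup of $H_i$ need not be invariant under $N_S(G_i)$, so ``spreading it over the $\overline{S}$-orbit and adjoining it to $S$'' is not well defined; the analogous step in Lemma~\ref{NonAbelianSimpleDirectAutomorphisms} works only for the projections $H_i$ themselves, which \emph{are} permuted coherently because $S\cap(G_1\times\ldots\times G_n)$ is normal in $S$. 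Unnecessary, because the paper never leaves the $H_i$ (its $L_i$). The ingredient you are missing is Lemma~\ref{NormalizerIntersection}, which yields $N_{G_i}(H_i)=H_i$. This does two jobs at once. It makes the stabilizer in $G\wr M$ of the tuple $(H_1,\ldots,H_n)$, for the conjugation action on tuples of conjugates, equal to $N_{G\wr M}(H_1\times\ldots\times H_n)=(H_1\times\ldots\times H_n)S=S$ exactly, so a regular point for that action is literally a base with respect to $S$; and it settles your first facet by pure monotonicity: for a maximal solvable $T\geq H_i$ the $G_i$-equivariant map $H_ix\mapsto Tx$ sends any tuple lying over a $T$-regular tuple to an $H_i$-regular tuple and keeps distinct orbits distinct, whence $\Base_{H_i}(G_i)\leq\Base_T(G_i)\leq k$ and $\Reg_{H_i}(G_i,k)\geq\Reg_T(G_i,k)\geq5$. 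No passage from $H_i$ up to $T_i$ and back is ever required.

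Your second facet (compatibility with the core) is dissolved not inside the combinatorial argument but by three opening reductions you omit: in a minimal counterexample $S(G)=\{e\}$ (hence the core of $S$ in $G\wr M$ is trivial and nothing ``has to survive''), $(G_1\times\ldots\times G_n)S=G\wr M$ (so $\overline{S}=M$ acts faithfully on $\{1,\ldots,n\}$ and the asymmetric partition is taken for $M$ itself), and $M$ is transitive (the intransitive case splits $G\wr M$ subdirectly into two smaller wreath products whose bases are combined coordinatewise). The last reduction is not cosmetic: your phrase ``the regular orbit indexed by $c(i)$'' presupposes a consistent labelling of the five regular orbits across different coordinates, and that labelling is produced by transporting the orbits from coordinate $1$ by elements of $M$ and verifying --- this is the paper's claim \eqref{orbitGi} --- that conjugation by an arbitrary element of $G\wr M$ respects it. So the approach is the right one, but as written the proof is incomplete at exactly the points you flag; Lemma~\ref{NormalizerIntersection}, the monotonicity observation above, and the three reductions are what you would still need to supply.
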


\begin{proof}
 We have $G\wr M=\left(G_1\times\ldots\times G_n\right)\splitext M$. Moreover $S(G\wr M)=S(G_1)\times
\ldots\times S(G_n)$, since $C_M(G_1\times \ldots\times G_n)=\{e\}$. Assume by contradiction that $G\wr M$ is a counter
example to the lemma with $\vert G\wr M\vert $ minimal. Then clearly $S(G\wr M)=\{e\}$, i.e., $S(G)=\{e\}$, otherwise
we substitute $G$ by $G/S(G)$ and proceed by induction.

Since $G\wr M$ is a counter example to the lemma, there exists a maximal solvable subgroup $S$ of $G\wr M$ such that
for every $x_1,\ldots,x_k\in G\wr M$ we have $S^{x_1}\cap\ldots\cap S^{x_k}\not=\{e\}$. It is clear that $(G_1\times
\ldots\times G_n)S=G\wr M$, otherwise consider the image $\overline{S}$ of $S$ under the natural homomorphism $G\wr
M\rightarrow M$. We obtain that $(G_1\times
\ldots\times
G_n)S= G\wr \overline{S}< G\wr M$, so  we substitute $G\wr M$ by $G\wr
\overline{S}$ and
proceed by induction. The
minimality of $G\wr M$  implies also that $M$ is
transitive, otherwise we would obtain that
$G\wr M\leq (G\wr M_1)\times (G\wr M_2)$, where $M_1\leq \Sym_m$, $M_2\leq
\Sym_{n-m}$, and proceed by induction. Indeed denote
the projections of $G\wr M$ onto $G\wr M_1$ and
$G\wr M_2$  by $\pi_1$ and $\pi_2$  respectively. Up to renumbering we may
suppose that  $G\wr M_1=(G_1\times\ldots\times
G_m)\splitext M_1$ and $G\wr M_2=(G_{m+1}\times\ldots\times
G_n)\splitext M_2$. Denote $G_1\times\ldots\times G_m$ by $E_1$ and
$G_{m+1}\times\ldots\times G_n$ by $E_2$. Since $G\wr
M=(G_1\times\ldots \times G_n)S$, $E_1\leq
\mathrm{Ker}(\pi_2)$, and $E_2 \leq
\mathrm{Ker}(\pi_1)$, it
follows that
$(G\wr M)\pi_i= E_i(S\pi_i)$ (we identify
$E_i\pi_i$ with $E_i$, since $E_i\pi_i\simeq
E_i$).
By induction for each $i\in\{1,2\}$ there
exist elements $x_{1,i},\ldots,x_{k,i}$ of
$E_i(S\pi_i)$ such that
\begin{equation}\label{Proj1}
(S\pi_i)^{x_{1,i}}\cap \ldots\cap
(S\pi_i)^{x_{k,i}}=\{e\}.
\end{equation}
Since $G\pi_i=E_i(S\pi_i)$, we may assume that $x_{1,i},\ldots,x_{k,i}$ are in
$E_i$. Consider
$x_1=x_{1,1}x_{1,2},\ldots,x_k=x_{k,1}x_{k,2}$.
Since \eqref{Proj1} is true for every $i$,  we
have
\begin{equation*}
S^{x_1}\cap \ldots\cap S^{x_k}=\{e\},
\end{equation*}
and $G$ is not a counter example.

Consider $L=S\cap (G_1\times\ldots\times G_n)$ and denote by $\pi_i$ the natural
projection $G_1\times\ldots\times
G_n\rightarrow G_i$. Put $L_i=L^{\pi_i}$. Clearly $L\leq L_1\times \ldots\times L_n$. If $x\in S$ and $G_i^x=G_j$, then
$L_i^x=L_j$, since $L$ is normal in $S$. Hence $S$ normalizes
$L_1\times\ldots\times L_n$ and by the maximality of~$S$ we have
$L=L_1\times\ldots\times L_n$.

Clearly $N_{G_1\times\ldots\times G_n}(L_1\times\ldots\times L_n)=N_{G_1}(L_1)\times\ldots\times N_{G_n}(L_n)$. By
Lemma \ref{NormalizerIntersection} we obtain that $N_{G_1\times\ldots\times G_n}(L_1\times\ldots\times
L_n)=L_1\times\ldots\times L_n$, hence $N_{G_i}(L_i)=L_i$ for $i=1,\ldots,n$.
Denote by $\Omega_i$ the set $\{L_i^x\mid
x\in G_i\}$, then $G_i$ acts on $\Omega_i$ by conjugation. Since $N_{G_i}(L_i)=L_i$, it follows that $L_i$ is the point
stabilizer under this action. Set $\Omega=\Omega_1\times\ldots\times \Omega_n$. For every $x\in G\wr
M$ and for every $i$ we have $L_i^x\leq G_j$ for some $j$. We show that
\begin{equation}\label{orbitGi}
\text{if } L_i^x\leq G_j\text{ then } L_i^x\in L_j^{G_j}\text{, i.e., there
exists }y\in
G_j\text{ such that }L_j^y=L_i^x.
\end{equation}
 Since $(G_1\times\ldots\times G_n)\splitext M=(G_1\times\ldots\times G_n)S$, it
follows
that there exists $s\in S$ with $G_i^s=G_j$. We also have $L_i^s=L_j$, since $L$ is normal in $S$. Thus
$L_i^x=L_j^{s^{-1}x}$. Now $s^{-1}x=g_1\cdot \ldots\cdot g_n\cdot h$, where $g_i\in G_i$ for $i=1,\ldots,n$ and $h\in
M$. Since $M$ permutes the $G_i$-s, it follows that for every
$i=1,\ldots,n$, either $G_i^h\cap G_i=\{e\}$, or
$h$ centralizes $G_i$. Thus we
obtain that $L_j^{s^{-1}x}=L_j^{g_j}$. So $G\wr M$ acts by conjugation on
$\Omega$ and  $S$ is the stabilizer of the
point $(L_1,\ldots,L_n)$. Therefore we need to show that $\Omega^k$ possesses
a $(G\wr
M)$-regular orbit.

The conditions of the
lemma imply that there exist $G_1$-regular points
$\omega_1,\ldots,\omega_s\in\Omega_1^k$ lying in distinct $G_1$-orbits. If we
choose $h_1=e, h_2,\ldots,h_n\in M$ so that $G_1^{h_i}=G_i$, then $\omega_1^{h_i},\ldots,\omega_s^{h_i}\in\Omega_i^k$
are $G_i$-regular points, and \eqref{orbitGi} implies that they are in distinct
$G_i$-orbits. We set
$\omega_{i,j}=\omega_i^{h_j}$. By Lemma \ref{Seresslemm} there exists an asymmetric partition $P_1\sqcup P_2\sqcup
P_3\sqcup P_4\sqcup P_5=\{1,\ldots,n\}$ for $M$. Since $s\ge 5$ we can choose
$\omega=(\omega_{i_1,1},\ldots,\omega_{i_n,n})$ so that
$i_t=i_j$ if and only if $t,j$ lie in the same $P_l$. Now we show that
$\omega\in \Omega^k$ is a $(G\wr
M)$-regular point.
Indeed, consider $g=(g_1\ldots g_n)h$, where $g_i\in G_i$ for $i=1,\ldots,n$ and
$h\in M$, and assume that $\omega
g=\omega$. It follows that $\omega h^{-1}=\omega  (g_1\ldots g_n)$, i.e.,
\begin{equation*}
(\omega_{i_1,1},\ldots,\omega_{i_n,n})h^{-1}=
(\omega_{i_{(1h)},1},
\ldots , \omega_ { i_{(nh)} ,
n})=(\omega_{i_1,1}g_1,\ldots,\omega_{i_n,n}g_n).
\end{equation*}
Therefore  $\omega_{i_{(jh)},j}$ and $\omega_{i_j,j}$ are in the
same $G_j$-orbit, i.e., $i_{(jh)}=i_j$. By construction, $jh$ and $j$
are in the same $P_l$. Whence $h$ stabilizes the partition $P_1\sqcup
P_2\sqcup P_3\sqcup P_4\sqcup P_5$ and $h=e$. We obtain that
$(\omega_{i_1,1},\ldots,\omega_{i_n,n})=
(\omega_{i_1,1}g_1,\ldots,\omega_{i_n,n}g_n)$. By construction,
$\omega_{i_j,j}$ is a $G_j$-regular point for every $j=1,\ldots,n$, so
$g_1=\ldots=g_n=e$, i.e., $g=e$ and $\omega\in\Omega^k$ is a $(G\wr M)$-regular
point.
\end{proof}

\section{Proof of the main theorem and the corollary}

\noindent {\itshape Proof of Theorem {\em\ref{MainTheorem}}.}\ \  Assume that
the claim is
false and $G$ is a counter example of minimal order. Fix a maximal solvable
subgroup $S$ of $G$ with $\Base_S(G)>k$.

Assume that $S(G)\not=\{e\}$. Then there exists a minimal elementary abelian
normal
subgroup $K$ of $G$. Since elements from distinct minimal normal
subgroups commute, we may suppose that $G_1\leq K$ and there exists $l$ such
that $G_l=K$, i.e., the composition series
\eqref{compositionseries} is a
refinement of a chief series starting with $K$. In this case, if
\begin{equation*}
 \overline{\phantom{G}}:G\rightarrow G/K=\overline{G}
\end{equation*}
is the
natural homomorphism, then
\begin{equation*}
\{\bar{e}\} =\overline{G}_l< \overline{G}_{l+1}<\ldots<\overline{G}_n=\overline{G}
\end{equation*}
is a composition series
of $\overline{G}$ that is a refinement of a chief series of $\overline{G}$. Moreover, for every nonabelian
$\overline{G}_i/\overline{G}_{i-1}$, Lemma \ref{InducedAutomorphismsUnderHomomorphism} implies
$\Aut_{\overline{G}}(\overline{G}_i/\overline{G}_{i-1})\simeq
\Aut_G(G_i/G_{i-1})$. Since $G$ satisfies {\bfseries(Orb-solv)} for some $k$,
we obtain that $\overline{G}$ satisfies
{\bfseries(Orb-solv)} for the same $k$.   In view of the minimality of $G$,
there exist
$x_1,\ldots,x_k\in G$ such that
\begin{equation*}
\overline{S}^{\bar{x}_1}\cap \ldots\cap
\overline{S}^{\bar{x}_k}=S(\overline{G}).
\end{equation*}
 Now $K\leq S(G)$, hence
$\overline{S(G)}=S(\overline{G})$. Therefore  $S^{x_1}\cap \ldots\cap
S^{x_k}=S(G)$, i.e.,
$G$ is not a counter
example.

Thus we may assume that $S(G)=\{e\}$. Consider the generalized Fitting subgroup $F^\ast (G)$ of $G$. Since $S(G)=\{e\}$,
we obtain that
$F^\ast(G)=L_1\times\ldots\times L_n$ is a product of nonabelian simple
groups and, by \cite[Theorem~9.8]{Isaacs},
$C_G(F^\ast
(G))=Z(F^\ast(G))=\{e\}$. In particular, $S(F^\ast(G)S)=\{e\}$. If
$F^\ast(G)S\not= G$, then, in view of the minimality of $G$, there exist
$x_1,\ldots,x_k\in F^\ast(G)S$ such that $S^{x_1}\cap \ldots\cap
S^{x_k}=S(F^\ast(G)S)=\{e\}$, i.e., $G$ is not a
counter example. It follows that $G=F^\ast(G)S$. Moreover, since
$L_1,\ldots,L_n$ are nonabelian simple, \cite[Satz~12.5, p.~69]{Huppert} implies
that
$G$, acting by conjugation, permutes the elements of~${\{L_1,\ldots,L_n\}}$.

Set $E_1:=\langle L_1^S\rangle$ and $E_2=\langle L_i\mid
L_i\not\in\{L_1^s\mid s\in S\}\rangle$. Since $F^\ast(G)=L_1\times\ldots\times
L_n$, we
obtain that  $F^\ast(G)=E_1\times E_2$, where
$E_1$ and $E_2$ are $S$-invariant subgroups. By \cite[Hilfssatz~9.6,
p.~48]{Huppert} there exists a homomorphism
$G\rightarrow G/C_G(E_1)\times G/C_G(E_2)$, such that the image of $G$ is a subdirect product of $G/C_G(E_1)$ and $G/C_G(E_2)$, while the
kernel is equal to
$C_G(E_1)\cap C_G(E_2)=C_G(F^\ast(G))=\{e\}$. Denote the projections of $G$ onto
$G/C_G(E_1)$ and
$G/C_G(E_2)$  by $\pi_1$ and $\pi_2$  respectively. Since $G=F^\ast(G)S$,
$E_1\leq \mathrm{Ker}(\pi_2)$ and $E_2\leq \mathrm{Ker}(\pi_1)$, it
follows that
$G\pi_1=E_1 (S\pi_1)$ and $G\pi_2=E_2 (S\pi_2)$  (we identify $E_i\pi_i$ with
$E_i$ since $E_i\pi_i\simeq E_i$).

Suppose that $E_1\ne F^\ast(G)$. Then, by induction for each $i\in\{1,2\}$ there
exist elements $x_{1,i},\ldots,x_{k,i}$ of
$E_i(S\pi_i)$ such that
\begin{equation}\label{Proj}
(S\pi_i)^{x_{1,i}}\cap \ldots\cap
(S\pi_i)^{x_{k,i}}=\{e\}.
\end{equation}
Since $G\pi_i=E_i(S\pi_i)$, we may assume that $x_{1,i},\ldots,x_{k,i}$ are in $E_i$. Consider
$x_1=x_{1,1}x_{1,2},\ldots,x_k=x_{k,1}x_{k,2}$.
Since \eqref{Proj} is true for every $i$ and $\mathrm{Ker}(\pi_1)\cap
\mathrm{Ker}(\pi_2)=\{e\}$,   we have
\begin{equation*}
S^{x_1}\cap \ldots\cap S^{x_k}=\{e\},
\end{equation*}
and $G$ is not a counter example.

Therefore  $E_1=F^\ast(G)$ and $S$ acts transitively on $\{L_1,\ldots,L_n\}$.
Since $\Aut_G(L_1)$ satisfies
{\bfseries(Orb-solv)} for some $k$, we may assume
that $n>1$. By Lemma \ref{NonAbelianSimpleDirectAutomorphisms} and by the
discussion preceding it, we may assume
that $G=(A_1\times\ldots\times
A_n)\splitext K=A_1\wr
K$, where $A_i=\Aut_G(L_i)$, $K=G\rho\leq\Sym_n$ and $\rho$ is the permutation
representation of $G$ on the set $\{L_1,\ldots,L_n\}$. Since $G=F^\ast(G)S$, we
see that $K=S\rho$ is solvable. Lemma
\ref{WreathWithSolvableBase} (applied with $G=A$) implies that $\Base_S(G)\le k$
for every maximal solvable subgroup $S$ of $G$. This final
contradiction completes the proof. \qed
\vspace{\baselineskip}

\noindent {\itshape Proof of Corollary {\em\ref{VdoZenkovGeneralization}}.}\ \
Let $G$ be a group
satisfying {\bfseries(Orb-solv)} for $k=5$. Assume that $G$ possesses a solvable
$\pi$-Hall subgroup $H$. Consider the natural homomorphism
\begin{equation*}
\overline{\phantom{G}}:G\rightarrow G/S(G).
\end{equation*}
 Since $H$ is
solvable, it follows that there exists a maximal solvable subgroup $S$ of $G$
with $H\leq S$. By Theorem \ref{MainTheorem} there exist $x_1,x_2,x_3,x_4,x_5$
such that
\begin{equation*}
S^{x_1}\cap S^{x_2}\cap S^{x_3}\cap S^{x_4}\cap S^{x_5}=S(G).
\end{equation*}
Thence $H^{x_1}\cap H^{x_2}\cap H^{x_3}\cap H^{x_4}\cap H^{x_5}\leq S(G)$ and
$\overline{H}^{\bar{x}_1}\cap \overline{H}^{\bar{x}_2}\cap
\overline{H}^{\bar{x}_3}\cap \overline{H}^{\bar{x}_4}\cap
\overline{H}^{\bar{x}_5}=\{\bar{e}\}.$ Consider $H\cap S(G)=K$. As $S(G)$ is
normal in $G$, we obtain that $K$ is a $\pi$-Hall subgroup of $S(G)$. In view of
\cite[Theorem~1.3]{Dolfi} or
\cite[Theorem~1.3]{VdovinIntersSolv} there exist $x,y\in S(G)$ such that $K\cap
K^x\cap K^y=O_\pi(S(G))$. As $O_\pi(G)$ is a normal $\pi$-subgroup of $G$ and
$H$ is a solvable $\pi$-Hall subgroup, we get $O_\pi(G)\leq H$ and $O_\pi(G)$
is solvable. Therefore $O_\pi(G)\leq S(G)$ and $O_\pi(G)\leq
O_\pi(S)$. Thus $O_\pi(G)=O_\pi(S)$. Therefore there exist $y_1,y_2,y_3,y_4,y_5$
such that $K^{y_1}\cap K^{y_2}\cap K^{y_3}\cap K^{y_4}\cap K^{y_5}=O_\pi(G)$.
Denote by $M_i$ the complete preimage of $\overline{H}^{\bar{x}_i}$ in $G$,
for $i=1,2,3,4,5$. Since $K^{y_i}$ and $S(G)\cap H^{x_i}$ are $\pi$-Hall
subgroup of $S(G)$ and since $S(G)$ is solvable, there exists $z_i\in S(G)$
with
\begin{equation*}
K^{y_i}=(S(G)\cap H^{x_i})^{z_i}=S(G)\cap H^{x_iz_i}.
\end{equation*}
Clearly $\overline{H}^{\bar{x}_i}=\overline{H}^{\bar{x}_i\bar{z}_i}$ and so
\begin{equation*}
 H^{x_1z_1}\cap \ldots\cap H^{x_5z_5}\subseteq S(G).
\end{equation*}
Hence $H^{x_1z_1}\cap \ldots\cap H^{x_5z_5}=K^{y_1}\cap \ldots\cap
K^{y_5}=O_\pi(G)$.\qed

\section{Final notes}

In this final section we consider two natural problems related with the main
subject of the paper.

\begin{prob}\label{LowerBound}
Given $H\leq G$, how to find a lower bound for $\Base_H(G)$?
\end{prob}

\begin{prob}\label{RegCondition}
Is it possible to remove  condition $\Reg_{S}(\Aut_G(G_i,G_{i-1}),k)\ge 5$?
\end{prob}

Consider Problem \ref{LowerBound} first. Assume that $G$ acts faithfully and
transitively on $\Omega$, and ${\Base(G)=k>1}$. Consider a $G$-regular point
$(\omega_1,\ldots,\omega_k)\in\Omega^k$. Clearly $\omega_i\not=\omega_j$ for
$i\not=j$. Hence we obtain
\begin{equation}\label{SizeGOrbit}
\vert G\vert=\vert  (\omega_1,\ldots,\omega_k)G\vert\leq
\vert\Omega\vert\cdot(\vert\Omega\vert-1)\cdot\ldots\cdot
(\vert\Omega\vert-k+1)<\vert\Omega\vert^k.
\end{equation}
Now consider $H\leq G$ such that $H$ is not normal in $G$ and assume that
$\Base_H(G)=k$. Inequality \eqref{SizeGOrbit} implies $\vert G/H_G\vert<\vert
G:H\vert^k$, and so
\begin{equation}\label{IndexBound1}
 \vert H/H_G\vert<\vert
G:H\vert^{k-1}.
\end{equation}
Inequality \eqref{IndexBound1} gives us the lower bound for $\Base_H(G)$.
Namely,
\begin{equation}\label{IndexBound2}
\Base_H(G)\ge\min\{k\mid \vert G:H\vert^{k-1}>\vert H/H_G\vert\}.
\end{equation}

Theorem~2.13 from \cite{babai} implies that there exists a constant $c$ such
that every finite group possessing a solvable subgroup of index $n$ possesses a
normal solvable subgroup of index at most $n^c$. Conjecture 6.6 from the same
paper asserts that $c\leq 7$. Therefore \eqref{IndexBound1} implies that part
(a) of Problem 17.41 from the ``Kourovka notebook'' is a strengthen of the
original Conjecture 6.6 from~\cite{babai}.

Now we discuss Problem \ref{RegCondition}. First we show that the condition
$\Reg_{S}(\Aut_G(G_i,G_{i-1}),k)\ge 5$ is
essential. The following example is given by V.I.Zenkov in~\cite{Zenkov}.

\begin{exmp}\label{S5wrS2}
 Consider $G=\Sym_5\wr \Sym_2$ and $S=\Sym_4\wr\Sym_2$. It is evident that
$\Alt_5$ is the unique nonabelian composition factor of $G$ (however there are
two nonabelian composition factors isomorphic to $\Alt_5$). It is also easy to
see, that for every solvable subgroup $T$ of $\Sym_5=\Aut(\Alt_5)$ we have
$\Base_T(\Sym_5)\le 4$.  In this case we have $\Reg_{\Sym_4}(\Sym_5,4)=1$ and
the lemma from \cite{Zenkov} implies that~$\Base_S(G)=5$.
\end{exmp}

The next example obtained in \cite{VdovinZenkov} shows that there exists an
almost simple group $G$ possessing a solvable subgroup $S$ with $\Base_S(G)=5$.

\begin{exmp}\label{S8}
 Consider $G=\Sym_8$ and $S=\Sym_4\wr\Sym_2$.  Then $\Base_S(G)=5.$ Notice also
that in \cite{VdovinZenkov} the inequality $\Reg_S(G,5)\ge12$ is
proven. Furthermore $\vert S\vert<\vert G:S\vert^2$ and so in this case
$\Base_S(G)$
is greater than the lower bound given by~\eqref{IndexBound2}.
\end{exmp}

We show that if $k\ge 6$, then we can guarantee that
$\Reg_{S}(\Aut_G(G_i/G_{i-1}),k)\ge 5$. More precisely,  the
following lemma holds.

\begin{lem}\label{LowerBoundReg}
 Let $G$ be a transitive permutation group acting on $\Omega=\{1,\ldots,n\}$ and let the stabilizer $S$ of $1$ be
solvable. Assume that $k=\max\{\Base(G),6\}$. Then $\Reg(G,k)\ge5$.
\end{lem}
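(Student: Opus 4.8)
The plan is to combine a cheap monotonicity estimate with a symmetry argument on a minimal base, according to whether $\Base(G)$ is at most $5$ or at least $6$. First I would record the elementary recursion $\Reg(G,m+1)\ge n\,\Reg(G,m)$, valid for every $m$: appending an arbitrary point $j\in\Omega$ to a $G$-regular $m$-tuple yields a $G$-regular $(m+1)$-tuple, and the resulting map from (regular $m$-tuples)$\times\Omega$ into the regular $(m+1)$-tuples is injective, so dividing cardinalities by $|G|$ gives the inequality. Iterating from $m=\Base(G)$ and using $\Reg(G,\Base(G))\ge1$ yields
\begin{equation*}
\Reg(G,m)\ge n^{\,m-\Base(G)}\qquad\text{for all }m\ge\Base(G).
\end{equation*}
I would also note that a minimal base consists of pairwise distinct points, so that $n\ge\Base(G)$.

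Suppose first that $\Base(G)\le 5$, so $k=6$. Then the displayed bound gives $\Reg(G,6)\ge n^{\,6-\Base(G)}$, and since $n\ge\Base(G)$ a one-line check over the five values $\Base(G)=1,\dots,5$ shows $n^{\,6-\Base(G)}\ge5$ in each case (e.g. $n\ge5$ when $\Base(G)=5$, and $n^{3}\ge27$ when $\Base(G)=3$). This disposes of the case $k=6$; only the degenerate $n=1$ (trivial group) must be excluded.

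The substantial case is $b:=\Base(G)\ge6$, where $k=b$ and the recursion is vacuous. Here I would fix a minimal base $B=\{\alpha_1,\dots,\alpha_b\}$ (a set of $b$ distinct points with trivial pointwise stabilizer) and pass to its setwise stabilizer $G_B$. The $b!$ orderings of $B$ are all $G$-regular $b$-tuples; two orderings lie in the same $G$-orbit exactly when they lie in the same $G_B$-orbit, and $G_B$ acts freely on these orderings (only the identity fixes an ordering pointwise), so the orderings split into exactly $b!/|G_B|=[\Sym_b:G_B]$ distinct regular orbits. Hence $\Reg(G,b)\ge[\Sym_b:G_B]$. Now $G_B$ acts faithfully on $B$ with point stabilizers $G_B\cap G_{\alpha_i}$, and these are solvable because each $G_{\alpha_i}$ is conjugate to $S$. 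For $b\ge6$ the group $\Alt_{b-1}$ is nonsolvable, so $G_B$ cannot contain $\Alt_b$; by the classical fact that for $b\ge5$ every subgroup of $\Sym_b$ of index less than $b$ contains $\Alt_b$, we conclude $[\Sym_b:G_B]\ge b\ge6$, and therefore $\Reg(G,b)\ge b\ge 5$.

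The crux is the third paragraph, which is also the only place the solvability hypothesis is genuinely used: converting the symmetry of the orderings of a minimal base into the index bound $\Reg(G,b)\ge[\Sym_b:G_B]$, and then ruling out $\Alt_b\le G_B$ via solvability of the point stabilizer. The points I expect to need the most care are the orbit-counting identity $\Reg(G,b)\ge b!/|G_B|$ (freeness of the $G_B$-action on orderings) and the threshold $b\ge6$, which is exactly what makes $\Alt_{b-1}$ nonsolvable and hence forces $[\Sym_b:G_B]\ge b\ge 5$; the threshold $k=\max\{\Base(G),6\}$ in the statement is precisely calibrated to this.
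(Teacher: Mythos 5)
Your proof is correct, and it takes a genuinely different route from the paper's in both halves. The paper first reduces to the point stabilizer via Lemma \ref{OrbitsAndRegularityForPointStabilizer} (counting $S$-regular orbits on $(\Omega\setminus\{1\})^{k-1}$), then takes the setwise stabilizer $T\le S$ of the support of a regular $(k-1)$-tuple, embeds $T$ into $\Sym_{k-1}$, and counts orbits on the $(k-1)!$ orderings as $(k-1)!/\vert T^\varphi\vert$, bounding $\vert T^\varphi\vert$ by Seress's asymmetric partitions (Lemma \ref{Seresslemm}) when $k\ge 9$ and by ATLAS bounds on solvable subgroups of $\Sym_{k-1}$ when $6\le k\le 8$. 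You instead split along $\Base(G)\le 5$ versus $\Base(G)\ge 6$. Your padding inequality $\Reg(G,m+1)\ge n\,\Reg(G,m)$ disposes of the first case elementarily (and, as a bonus, avoids the tacit assumption in the paper's argument that one can find $k-1$ distinct points in $\Omega\setminus\{1\}$, which can fail for small $n$ when $k=6>\Base(G)$). In the second case you work with the setwise stabilizer $G_B$ of a minimal base inside $G$ rather than inside $S$; this $G_B$ need not be solvable, and you correctly use only the solvability of its point stabilizers $G_B\cap G_{\alpha_i}$ to rule out $\Alt_b$ inside its image in $\Sym_b$ (since $\Alt_{b-1}$ is nonsolvable for $b\ge 6$), whence the index of that image is at least $b$ and $\Reg(G,b)\ge b!/\vert G_B\vert\ge b\ge 6$. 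The individual steps (distinctness of the entries of a minimal base, freeness of $G_B$ on the orderings, the identification of $G$-orbits of orderings with $G_B$-orbits, and the classical fact that a subgroup of $\Sym_b$ of index less than $b$ contains $\Alt_b$ for $b\ge 5$) all check out. Your argument is uniform in $b\ge 6$ and needs neither Seress's theorem nor the ATLAS; the paper's version buys the sharper orbit count $(k-1)!/\vert T^\varphi\vert$ and reuses machinery already present for Lemmas \ref{WreathWithSolvableBase} and \ref{LowerBoundForReg}. Both your proof and the paper's implicitly exclude the degenerate case $n=1$, where the statement fails as literally written; you flag this explicitly, which is appropriate.
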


We start with a technical result.

\begin{lem}\label{OrbitsAndRegularityForPointStabilizer}
Let $G$ be a transitive subgroup of $\Sym_n$. Denote $\Omega=\{1,\ldots,n\}$.
Let $H$ be the stabilizer of $1$ in $G$.
\begin{itemize}
 \item[{\em (a)}] $(1,i_2,\ldots,i_k)$ and $(1,j_2,\ldots,j_k)$  are in the same
$G$-orbit if and only if $(i_2,\ldots,i_k)$ and
$(j_2,\ldots,j_k)$ are in the same $H$-orbit;
\item[{\em (b)}] every $G$-orbit of $\Omega^k$ contains an element
$(1,i_2,\ldots,i_k)$;
\item[{\em (c)}] $(1,i_2,\ldots,i_k)$ is a $G$-regular point if and only if
$(i_2,\ldots,i_k)$ is an $H$-regular point;
\item[{\em (d)}] the number of $G$-orbits in $\Omega^k$ is equal to the number
of $H$-orbits in
$(\Omega\setminus\{1\})^{k-1}$;
\end{itemize}
\end{lem}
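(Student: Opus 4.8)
The plan is to handle the four parts in the order (b), (a), (c), (d): part (b) supplies a normal form $(1,i_2,\ldots,i_k)$ for every tuple, part (a) decides when two such normal forms are $G$-equivalent, part (c) is a stabilizer computation, and part (d) is a counting corollary of (a) and (b). Everything rests on the single fact that $H$ is exactly the set of elements of $G$ fixing the coordinate $1$. For (b), transitivity lets me pick, for any $(i_1,\ldots,i_k)\in\Omega^k$, an element $g\in G$ with $i_1g=1$; then $(i_1,\ldots,i_k)g=(1,i_2g,\ldots,i_kg)$ lies in the same $G$-orbit with first coordinate $1$. For (a), if $(1,i_2,\ldots,i_k)g=(1,j_2,\ldots,j_k)$ then reading the first coordinate gives $1g=1$, so $g\in H$, and the remaining coordinates give $(i_2,\ldots,i_k)g=(j_2,\ldots,j_k)$; conversely any $h\in H$ realizing the latter equality fixes $1$ and hence carries $(1,i_2,\ldots,i_k)$ to $(1,j_2,\ldots,j_k)$. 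This equivalence is the heart of all four parts.

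For (c) I would invoke the orbit--stabilizer theorem, so that regularity of a point is the same as triviality of its stabilizer. An element of $G$ fixing $(1,i_2,\ldots,i_k)$ must fix $1$ and therefore lie in $H$, whence the $G$-stabilizer of $(1,i_2,\ldots,i_k)$ coincides with the $H$-stabilizer of $(i_2,\ldots,i_k)$; one is trivial exactly when the other is, which is (c). Part (d) then follows by assembling (a) and (b) into the map sending the $G$-orbit of $(1,i_2,\ldots,i_k)$ to the $H$-orbit of its tail $(i_2,\ldots,i_k)$: part (b) makes it defined on all $G$-orbits, part (a) makes it well defined and injective, and prefixing $1$ makes it surjective, so the orbit counts on the two sides coincide.

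The one point I expect to require genuine care is the exact index set in (d). The bijection above is most transparent with the tail $(i_2,\ldots,i_k)$ ranging over all of $\Omega^{k-1}$, whereas the statement ranges it over $(\Omega\setminus\{1\})^{k-1}$. I would reconcile this by passing through the regular orbits relevant to the application: a regular point has pairwise distinct coordinates, so once the first coordinate is $1$ the tail automatically avoids $1$, and combining this observation with (c) matches regular $G$-orbits on $\Omega^k$ with regular $H$-orbits on $(\Omega\setminus\{1\})^{k-1}$. Apart from this bookkeeping the argument uses nothing beyond transitivity and the definition of a point stabilizer, so I anticipate no further difficulty.
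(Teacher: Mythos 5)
Your arguments for (a), (b) and (c) are correct and essentially coincide with the paper's (which declares (a) and (b) evident and proves (c) by the same stabilizer computation, without naming orbit--stabilizer). The difficulty is part (d), and you have correctly put your finger on a real defect: the bijection coming from (a) and (b) identifies the $G$-orbits on $\Omega^k$ with the $H$-orbits on $\Omega^{k-1}$, \emph{not} on $(\Omega\setminus\{1\})^{k-1}$, and these counts genuinely differ for $k\ge 2$. Since $H$ fixes $1$, an $H$-orbit of tails either avoids the symbol $1$ in every entry or contains it in a fixed set of positions, so the orbit of $(1,1,\ldots,1)$, for example, is never reached by prefixing $1$ to a tail from $(\Omega\setminus\{1\})^{k-1}$. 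Already for $G=\Sym_2$, $k=2$ there are two $G$-orbits on $\Omega^2$ but only one $H$-orbit on $(\Omega\setminus\{1\})^{1}$; thus (d) as printed is false, and the paper's one-line ``clear from (a), (b) and (c)'' does not confront this.

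Your proposed repair, however, does not close the gap. First, it silently replaces (d) by a statement about \emph{regular} orbits, which is not what (d) asserts. Second, the key step ``a regular point has pairwise distinct coordinates'' is false in general: a tuple is $G$-regular exactly when the intersection of the point stabilizers of its entries is trivial, and repeating an entry does not change that intersection, so if $(\omega_1,\ldots,\omega_{k-1})$ is regular then so is $(\omega_1,\omega_1,\omega_2,\ldots,\omega_{k-1})$. Distinctness is guaranteed only when $k=\Base(G)$, whereas the lemma is invoked later with $k=\max\{\Base(G),6\}$, which can exceed $\Base(G)$. What is true, and what the application in Lemma~\ref{LowerBoundReg} actually requires, is only one direction: by (a) and (c) the map sending the $H$-orbit of $(i_2,\ldots,i_k)\in(\Omega\setminus\{1\})^{k-1}$ to the $G$-orbit of $(1,i_2,\ldots,i_k)$ is well defined, injective, and preserves regularity, so the number of regular $G$-orbits on $\Omega^k$ is at least the number of regular $H$-orbits on $(\Omega\setminus\{1\})^{k-1}$. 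If an equality of orbit counts is wanted, the correct right-hand side of (d) is the number of $H$-orbits on~$\Omega^{k-1}$.
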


\begin{proof}
(a) Evident.

(b) Follows from the fact that $G$ is transitive.

(c) If $(1,i_2,\ldots,i_k)$ is a $G$-regular point, then
$(1,i_2,\ldots,i_k)g=(1,i_2,\ldots,i_k)$ implies $g=e$. Assume that $h\in H$ is
chosen so that $(i_2,\ldots,i_k)h=(i_2,\ldots,i_k)$. Since $H$ is the stabilizer
of $1$, it follows that
$(1,i_2,\ldots,i_k)h=(1,i_2,\ldots,i_k)$, hence $h=e$ and $(i_2,\ldots,i_k)$ is
an $H$-regular point. Conversely, if $(i_2,\ldots,i_k)$ is
an $H$-regular point and $(1,i_2,\ldots,i_k)g=(1,i_2,\ldots,i_k)$, we obtain
$g\in H$, and
$(i_2,\ldots,i_k)g=(i_2,\ldots,i_k)$, hence
$g=e$ and $(1,i_2,\ldots,i_k)$ is a $G$-regular point.

(d) Clear from (a), (b) and (c).
\end{proof}

\noindent {\itshape Proof of Lemma {\em\ref{LowerBoundReg}}.}\ \
 In view of Lemma \ref{OrbitsAndRegularityForPointStabilizer}, we have that $S$ acts on $\Theta=\Omega\setminus\{1\}$
and the number of $G$-regular orbits on $\Omega^k$ is equal to the number of $S$-regular orbits on $\Theta^{k-1}$. Thus
we need to prove that $\Reg(S,k-1)\ge 5$, where $S$ acts on $\Theta$. Since
$k\ge \Base(G)$, Lemma \ref{OrbitsAndRegularityForPointStabilizer} (c) implies
that  there exist
$\theta_1,\ldots,\theta_{k-1}\in\Theta$ such that $(\theta_1,\ldots,\theta_{k-1})$ is an $S$-regular point in
$\Theta^{k-1}$.

 Consider $\Delta=\{\theta_1,\ldots,\theta_{k-1}\}$, let $T$ be the setwise
stabilizer of $\Delta$
in $S$, i.e., $T=\{x\in S\mid \Delta x=\Delta\}$. It is clear that
$(\theta_{1\sigma},\ldots,\theta_{(k-1)\sigma})$ is  an $S$-regular point for every $\sigma\in\Sym_{k-1}$. Moreover if
$\sigma,\tau\in \Sym_{k-1}$, then $(\theta_{1\sigma},\ldots,\theta_{(k-1)\sigma})$ and
$(\theta_{1\tau},\ldots,\theta_{(k-1)\tau})$ are in the same $S$-orbit if and only if there exists $x\in T$ such that
$(\theta_{1\sigma},\ldots,\theta_{(k-1)\sigma})^x=(\theta_{1\tau},\ldots,\theta_{(k-1)\tau})$. Consider the restriction
homomorphism $\varphi:T\rightarrow \Sym(\Delta)$. Since $(\theta_1,\ldots,\theta_{k-1})$ is an $S$-regular point (and so
a $T$-regular point), it follows that $Ker(\varphi)=\{e\}$, i.e., $\varphi$ is injective.

Assume that $k\ge 9$ first.
Consider an asymmetric
partition $P_1\sqcup P_2\sqcup P_3\sqcup P_4\sqcup
P_5=\{\theta_1,\theta_2,\ldots,\theta_{k-1}\}$ for $T^\varphi$ (the existence
of the partition follows by Lemma \ref{Seresslemm}).
Without loss of generality we may assume that
$\vert P_1\vert\ge \vert P_2\vert\ge \vert P_3\vert\ge \vert P_4\vert\ge
\vert P_5\vert$.  Since $k\ge 9$ (and so
$\vert\{\theta_1,\theta_2,\ldots,\theta_{k-1}\}\vert\ge8$) it follows that
either
$\vert P_1\vert\ge 3$, or $\vert P_1\vert= \vert P_2\vert= \vert P_3\vert=2$.

If $\vert P_1\vert\ge 3$, then, up to renumbering, we may assume that
$\theta_1,\theta_2,\theta_3\in P_1$. In this case for every distinct
$\sigma,\tau\in \Sym_3$ we have that
$(\theta_{1\sigma},\theta_{2\sigma},\theta_{3\sigma},\theta_4\ldots,\theta_{k-1}
)$ and
$(\theta_{1\tau},\theta_{2\tau},\theta_{3\tau},\theta_4,\ldots,\theta_{k-1})$ are in distinct $T^\varphi$-orbits, thus
these points are in distinct $T$-orbits, and so in distinct $S$-orbits. So $\Reg(S,k-1)\ge \vert\Sym_3\vert=6$ in this
case.

If $\vert P_1\vert= \vert P_2\vert= \vert P_3\vert=2$, then, up to renumbering, we may assume that $\theta_1,\theta_2\in
P_1$, $\theta_3,\theta_4\in P_2$, and $\theta_5,\theta_6\in P_3$. In this case
for every distinct $\sigma,\tau\in \Sym(\{1,2\})\times
\Sym(\{3,4\})\times \Sym(\{5,6\})$ we have
that
\begin{equation*}
(\theta_{1\sigma},\theta_{2\sigma},\theta_{3\sigma},\theta_{4\sigma},\theta_{5\sigma},\theta_{6\sigma},\theta_7\ldots,
\theta_ { k-1 } )\text { and }
(\theta_{1\tau},\theta_{2\tau},\theta_{3\tau},\theta_{4\tau},\theta_{5\tau},\theta_{6\tau},\theta_7\ldots,\theta_{
k-1})
\end{equation*}
 are in distinct $T^\varphi$-orbits, thus
these points are in distinct $T$-orbits, and so in distinct $S$-orbits. So $\Reg(S,k-1)\ge \vert\Sym(\{1,2\})\times
\Sym(\{3,4\})\times \Sym(\{5,6\})\vert=8$ in this
case.

Now assume that $6\le k\le 8$. Denote by $\Xi$ the subset
$\{(\theta_{1\sigma},\ldots,\theta_{(k-1)\sigma})\mid\sigma\in \Sym_{k-1}\}$ of $\Delta^{k-1}$. Then $T^\varphi$
acts on $\Xi$ and every point of $\Xi$ is $T^\varphi$-regular. Moreover
$\vert\Xi\vert=\vert\Sym_{k-1}\vert=(k-1)!$. We also have that $T^\varphi$ is a solvable subgroup of $\Sym_{k-1}$. It
is immediate (from \cite{ATLAS}, for example), that $\vert T^\varphi\vert\le 24$
for $k=6$,
$\vert T^\varphi\vert\le 72$ for $k=7$, and $\vert T^\varphi\vert\le 144$ for
$k=8$. Now the number of
$T^\varphi$-orbits on $\Xi$ is equal to~$\frac{(k-1)!}{\vert T^\varphi\vert}$
and direct computations show that this
number is at least~$5$.
\qed

At the end of the paper we show, how $\Reg_S(G,k)$ can be applied for the
computational purposes.  If we have a group $G$
and a maximal
solvable subgroup $S$ of $G$, then Theorem \ref{MainTheorem} gives us
an idea, how to find $\Base_S(G)$, or, at least, how to find an upper bound for
$\Base_S(G)$. However, for computation
purposes it is also important  to find  the base of $G$ with respect to $S$,
i.e., elements $x_1,\ldots,x_k$ such that
$S^{x_1}\cap\ldots\cap S^{x_k}=S_G$. In general it is computationally very hard
to find these elements and we can suggest just a
probabilistic approach in this direction. Denote by $\Omega$ the set of right
cosets of $S$ in $G$. If one knows
that $\Reg_S(G,k)\ge s$ and $\vert G:S\vert=\vert\Omega\vert=n$, then
$\vert\Omega^k\vert=n^k$, while $\Omega^k$ possesses at least $s\vert
G/S_G\vert$ regular points. So the probability that $k$ randomly chosen elements
from $\Omega$ form a base of $G$ with respect to $S$ is not less than

\begin{equation*}
 \varepsilon=\frac{s\cdot \vert G/S_G\vert}{n^k}\ge \frac{s}{n^{k-1}}.
\end{equation*}

The final lemma allows to obtain a lower bound for $\Reg_S(G,k)$ in a
particular case.

\begin{lem}\label{LowerBoundForReg}
 Let $G$ be a group and let $M$ be a solvable subgroup of $\Sym_n$. Assume that
there exists $k$ such that for every maximal solvable subgroup $T$ of $G$ the
inequalities
\begin{equation*}
 \Base_T(G)\leq k\text{ and }\Reg_T(G,k)=s\ge5
\end{equation*}
hold. Then for every maximal solvable subgroup $S$ of $G\wr M$ we have
$\Reg_S(G\wr M,k)\ge s$.
\end{lem}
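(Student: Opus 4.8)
The plan is to mirror, essentially verbatim, the argument already carried out for Lemma \ref{WreathWithSolvableBase}, since the present lemma differs only in that we must count the number of regular orbits rather than merely exhibit one. First I would set up the same reductions by minimal counterexample: write $G\wr M=(G_1\times\ldots\times G_n)\splitext M$, pass to $G/S(G)$ to assume $S(G)=\{e\}$, and reduce to the case in which $M$ is transitive and $(G_1\times\ldots\times G_n)S=G\wr M$ for the given maximal solvable subgroup $S$. In fact, since Lemma \ref{WreathWithSolvableBase} already establishes $\Base_S(G\wr M)\le k$, the only new content is the lower bound on $\Reg_S(G\wr M,k)$, so I would quote that lemma directly and then concentrate on constructing at least $s$ distinct regular $(G\wr M)$-orbits in $\Omega^k$.

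Next I would recover the same combinatorial setup as in the proof of Lemma \ref{WreathWithSolvableBase}: form $L=S\cap(G_1\times\ldots\times G_n)$, its projections $L_i=L^{\pi_i}$, observe via Lemma \ref{NormalizerIntersection} that $N_{G_i}(L_i)=L_i$, and identify $\Omega=\Omega_1\times\ldots\times\Omega_n$ with $\Omega_i=\{L_i^x\mid x\in G_i\}$, so that $S$ is the stabilizer of the point $(L_1,\ldots,L_n)$ and $\Reg_S(G\wr M,k)$ equals the number of regular $(G\wr M)$-orbits on $\Omega^k$. By hypothesis $\Reg_T(G,k)=s$ for the point stabilizer $T$, which gives $s$ distinct regular $G_1$-orbits represented by points $\omega_1,\ldots,\omega_s\in\Omega_1^k$; transporting these by elements $h_i\in M$ with $G_1^{h_i}=G_i$ produces regular $G_i$-orbit representatives $\omega_{i,j}=\omega_i^{h_j}$ in each coordinate, pairwise inequivalent by the analogue of \eqref{orbitGi}.

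The key step is then a counting argument replacing the single-base construction. Using the asymmetric partition $P_1\sqcup\ldots\sqcup P_5$ for $M$ furnished by Lemma \ref{Seresslemm}, I would assign to each block $P_l$ an index $i_l\in\{1,\ldots,s\}$ and form the point $\omega=(\omega_{i_1,1},\ldots,\omega_{i_n,n})$ where coordinate $j$ uses the index attached to the block containing $j$. The same verification as in Lemma \ref{WreathWithSolvableBase} shows each such $\omega$ is $(G\wr M)$-regular. The point is that distinct assignments of indices to blocks that genuinely differ on some block give representatives lying in distinct $(G\wr M)$-orbits, because an element moving one to another would have to stabilize the partition (forcing its $M$-part to be trivial) and then match up $G_j$-orbits coordinatewise, which is impossible when the indices differ. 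Since there are at least $s\ge 5$ choices for a single block (keeping the others fixed), this already yields at least $s$ distinct regular orbits, giving $\Reg_S(G\wr M,k)\ge s$.

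The main obstacle I anticipate is bookkeeping rather than conceptual: I must argue carefully that two index-assignments to the blocks produce \emph{inequivalent} regular orbits, which requires combining the partition-stabilization argument (to kill the $M$-component of any conjugating element) with the inequivalence of the $\omega_{i,j}$ across different values of $i$ in a single coordinate. One must also confirm that varying the index on just one block, with the remaining blocks held fixed, already supplies $s$ pairwise inequivalent orbits, so that no delicate inclusion–exclusion over all blocks is needed; the crude bound from a single block suffices to reach the claimed $\Reg_S(G\wr M,k)\ge s$.
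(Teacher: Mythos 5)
Your setup and reductions match the paper's proof of Lemma \ref{LowerBoundForReg} exactly (minimal counterexample, reduction to $M$ transitive and $G\wr M=(G_1\times\ldots\times G_n)S$, reuse of the points $\omega_{i,j}$ and the asymmetric partition $P_1\sqcup\ldots\sqcup P_5$). The gap is in your key counting step. You claim that two index-assignments to the blocks give inequivalent orbits ``because an element moving one to another would have to stabilize the partition (forcing its $M$-part to be trivial).'' That is not what the orbit criterion yields. Writing out $\omega g=\theta$ with $g=g_1\ldots g_nh$ and using that distinct indices give distinct $G_j$-orbits in each coordinate, one only concludes that $h$ \emph{permutes} the blocks $P_1,\ldots,P_5$ among themselves; it need not fix each block setwise, so the asymmetric-partition property does not force $h=e$. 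Consequently two assignments that are related by a block permutation induced by $M$ can lie in the same $(G\wr M)$-orbit, and your inequivalence claim fails.

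Your fallback, varying the index on a single block while holding the others fixed, does not repair this. For the regularity argument (and for the distinctness of coordinates within an orbit) the assignment of indices to the five blocks must be injective: if two blocks receive the same index, the stabilizer computation only shows that $h$ preserves the \emph{coarser} partition obtained by merging those blocks, which need not be asymmetric, so the point may fail to be $(G\wr M)$-regular (e.g.\ $M=\Sym_2$ with singleton blocks and equal indices gives a point fixed by the transposition). When $s=5$ there are exactly five indices for five blocks, so every injective assignment uses all of them and you cannot vary one block's index without either breaking injectivity or changing another block too. The paper resolves this precisely at this point: for $s=5$ it counts all $5!=120$ injective assignments, observes via the orbit criterion \eqref{criterionTheSameOrbit} that two of them are $(G\wr M)$-equivalent only if related by a block permutation coming from the solvable group $M$, whose image in $\Sym_5$ has order at most $24$, and concludes there are at least $120/24=5$ regular orbits; the case $s>5$ is handled separately by choosing, for each $i$, a regular point avoiding the index $i$. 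This division by $24$ is the essential idea your proposal is missing, and it is also why the hypothesis delivers only the bound $5$ rather than the naive count of assignments.
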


\begin{proof}
In the proof we preserve the notation from the proof of Lemma
\ref{WreathWithSolvableBase}. Assume that the claim is false and $G\wr M$ is a
counter example with $\vert G\wr M\vert $ minimal. Then $G\wr M$ possesses a
maximal solvable subgroup $S$ with $\Reg_S(G\wr M,k)<s$. The minimality of
$\vert G\wr M\vert$ implies that $S(G)=\{e\}$ (and so $S(G\wr M)=\{e\}$), and
$G\wr M=(G_1\times\ldots\times G_n)S$. Since $G\wr M$ is a minimal counter
example we also obtain that $M$ is transitive. Indeed, assume that $M$ is not
transitive, so $G\wr M\leq (G\wr M_1)\times (G\wr M_2)$, where $M_1\leq \Sym_m$
and $M_2\leq \Sym_{n-m}$. Up to renumbering we may suppose that $G\wr
M_1=(G_1\times \ldots\times G_m)\splitext M_1$ and $G\wr
M_2=(G_{m+1}\times\ldots\times G_n)\splitext M_2$. Denote $G_1\times
\ldots\times G_m$ by $E_1$ and $G_{m+1}\times\ldots\times G_n$ by $E_2$.
Consider the projections $\pi_1$ and $\pi_2$ of $G\wr M$ onto $G\wr M_1$ and
$G\wr M_2$ respectively. Since $G\wr M=(G_1\times\ldots\times G_n)S$, $E_1\leq
\mathrm{Ker}(\pi_2)$, and $E_2\leq \mathrm{Ker}(\pi_1)$, it follows that $(G\wr
M)\pi_i=E_i(S\pi_i)$ (we identify $E_i\pi_i$ with $E_i$ since $E_i\pi_i\simeq
E_i$). By induction for each $i\in\{1,2\}$ there
exists at least $s$  $(G\wr M)$-regular orbits with representatives
\begin{equation*}
 (Sx_{1,i,1},\ldots,Sx_{k,i,1}),\ldots,(Sx_{1,i,s},\ldots,Sx_{k,i,s}).
\end{equation*}
As we noted in the proof of Lemma \ref{WreathWithSolvableBase}, we may assume
that $x_{l,i,j}\in E_i$ for $l=1,\ldots,k$, $i=1,2$, $j=1,\ldots,s$. If we
denote
$x_{l,1,j}\cdot x_{l,2,j}$ by $x_{l,j}$, then for each $j=1,\ldots, s$ we
obtain that $(Sx_{1,j},\ldots,Sx_{k,j})$ is an $(G\wr M)$-regular point.
Clearly, $(Sx_{1,j},\ldots,Sx_{k,j})$ and $(Sx_{1,l},\ldots,Sx_{k,l})$ are in
distinct $(G\wr M)$-orbits for~${j\not=l}$.

Thus $M$ is transitive and $G\wr M=(G_1\times\ldots\times G_n)S$. Recall that
symbols $L_1,\ldots,L_n$, $\Omega_1,\ldots,\Omega_n$, $\Omega$, $\omega_{i,j}$
for $i=1,\ldots,s$, $j=1,\ldots,n$, $P_1,P_2,P_3,P_4,P_5$ are defined in the
proof of Lemma \ref{WreathWithSolvableBase}. In the proof of Lemma
\ref{WreathWithSolvableBase} we have shown that a point
$\omega=(\omega_{i_1,1},\ldots,\omega_{i_n,n})$ chosen so that
$i_t=i_j$ if and only if $t,j$ are in the same $P_l$ is an $(G\wr M)$-regular
point. If $s>5$, then for each $i=1,\ldots,s$ we can choose
$\omega^i=(\omega_{i_1,1},\ldots,\omega_{i_n,n})$ so that $i_t=i_j$
if and only if $t,j$ are in the same $P_l$ and $i\not\in \{i_1,\ldots,i_n\}$.
Now \eqref{orbitGi} implies that $\omega^1,\ldots,\omega^s$ are in distinct
$(G\wr M)$-orbits, so $G\wr M$ is not a counter example. Thus $s=5$.

Consider $\omega=(\omega_{i_1,1},\ldots,\omega_{i_n,n})$ and
$\theta=(\omega_{j_1,1},\ldots,\omega_{j_n,n})$, and assume that $\omega$ and
$\theta$ are in the same $(G\wr M)$-orbit. Therefore there exists $g=g_1\ldots
g_nh$, where $g_i\in G_i$ and $h\in M$, such that $\omega g=\theta$. The
equality $\omega g=\theta$ can be written as
\begin{equation*}
 (\omega_{i_1,1}g_1,\ldots,\omega_{i_n,n}g_n)=(\omega_{j_{(1h)},1},\ldots,
\omega_ { j_{(nh)} ,n}).
\end{equation*}
Thus for every $t=1,\ldots, n$ the equality
$\omega_{i_t,t}g_t=\omega_{j_{(th)},t}$ holds, and \eqref{orbitGi} implies that
$i_t=j_{(th)}$. Moreover, $\omega_{i,j}$ is a $G_j$-regular point for every
$i,j$, so $g_t=e$ for $t=1,\ldots,n$, i.e., $g=h\in M$. Thus we obtain that
\begin{multline}\label{criterionTheSameOrbit}
\omega\text{ and }\theta\text{ are in the same }(G\wr M)\text{-orbit  if and
only if}\\ \text{there exist }h\in M\text{ such that
}\omega_{i_t,t}=\omega_{j_{(th)},t}\text{ for }t=1,\ldots,n.
\end{multline}
Now assume that $\omega$ and $\theta$ are chosen so that
\begin{equation}\label{criterionRegularPoint}
 i_t=i_s \text{ (respectively }j_t=j_s\text{) if and only if }t,s\text{ are in
the same }P_l,
\end{equation}
in particular, $\omega$ and $\theta$ are $(G\wr M)$-regular points. If $\omega$
and $\theta$ are in the same $(G\wr M)$-orbit, then
\eqref{criterionTheSameOrbit} and \eqref{criterionRegularPoint} imply that $h$
permutes $P_1,P_2,P_3,P_4,P_5$. Since the order of a solvable subgroup of
$\Sym_5$ is not greater than $24$, we obtain that there exist at least $5$
($=\vert\Sym_5\vert/24$) points satisfying \eqref{criterionRegularPoint} and
lying in distinct $(G\wr M)$-orbits.
\end{proof}

We remark that the results in \cite{BurnessLiebeckShalev} and in the preceding
papers are obtained by using probabilistic methods. In particular, given almost
simple group $G$ with nonstandard action, it is shown that the probability for
$k$ (where $k\ge \Base(G)$) randomly chosen points to form the base tends to
$1$ as $\vert G\vert$ tends to infinity.

The author is grateful to the referee for careful consideration of the paper
and valuable comments. In particular, the referees' suggestions allows to
simplify the proof of Corollary~\ref{VdoZenkovGeneralization}.

\end{document}